\newcommand{\image}{\operatorname{im}}
\newcommand{\coker}{\operatorname{coker}}
\newcommand{\ab}[1]{#1_{\mathrm {ab}}}
\newcommand{\rk}{\operatorname{\mathrm{rk}}\nolimits}
\newcommand{\inv}{^{-1}}
\newcommand{\p}{\varphi}
\newcommand{\ov}[1]{\ensuremath{\overline {#1}}}
\newcommand{\til}[1]{\ensuremath{\widetilde {#1}}}
\newcommand{\Hom}{\operatorname{\mathrm{Hom}}\nolimits}
\newcommand{\Tor}{\operatorname{\mathrm{Tor}}\nolimits}
\newcommand{\Ext}{\operatorname{\mathrm{Ext}}\nolimits}
\newtheorem*{ThmA*}{Theorem~A}
\newtheorem{Thm}{Theorem}[section]
\newtheorem{Prop}[Thm]{Proposition}
\newtheorem{Lemma}[Thm]{Lemma}
{\theoremstyle{definition}
}
{\theoremstyle{remark}
}
\newtheorem{Cor}[Thm]{Corollary}
{\theoremstyle{remark}
}
{\theoremstyle{remark}
}
\theoremstyle{remark}
\theoremstyle{remark}
\theoremstyle{remark}
\theoremstyle{remark}
\newtheorem*{Claim*}{Claim}}
\numberwithin{equation}{section}
\title{The homology of completely simple semigroups}
\author{Benjamin Steinberg}
\address[B.~Steinberg]{%
    Department of Mathematics\\
    City College of New York\\
    Convent Avenue at 138th Street\\
    New York, New York 10031\\
    USA}
\email{bsteinberg@ccny.cuny.edu}
\thanks{The author was supported by a Simons Foundation Collaboration Grant, award number 849561, and the Australian Research Council Grant DP230103184.}
\date{May 10, 2024}
\keywords{Monoid homology, completely simple semigroup, classifying spaces, deficiency, homological finiteness}
\subjclass[2020]{20M50, 20M05}
\begin{document}

\begin{abstract}
I explicitly compute the Eilenberg-Mac Lane homology of a completely simple semigroup using topological means.  I also complete Gray and Pride's investigation into the homological finiteness properties of completely simple semigroups, as well as studying their topological finiteness properties.  I give a topological proof of Pride's unpublished homological lower bound for the deficiency of a monoid or semigroup. % is also provided.
\end{abstract}

\maketitle
\section{Introduction}

My goal here is to use a simple topological argument to compute the homology of a completely simple semigroup.  I'll also sketch how one might obtain the same result using algebraic means, although it requires some work to verify that a particular boundary map from a long exact homology sequence  behaves as the topological approach says it ought to do.  The topological approach also allows me establish analogues of the results of Gray and Pride~\cite{GrayPride} on homological finiteness properties of completely simple semigroups for the topological finiteness properties introduced in~\cite{TopFinite1}. Moreover, I complete Gray and Pride's study~\cite{GrayPride} of their homological finiteness properties by handling the case of where the semigroup contains infinitely many minimal left ideals. I also consider the cohomology of completely simple semigroups, obtaining more precise results than previous work of Nico~\cite{Nicocohom1}, but not a complete computation.

 Note that the second homology of a finite simple semigroup was computed using Squier's resolution associated to a complete rewriting system in~\cite{effsimple} in connection with the study of efficiency.  Pride, in an unpublished result, showed that the usual lower bound on deficiency for group presentations also works for monoids (and semigroups).  I'll give here a topological proof of Pride's result  so that it can be found in the literature.

Recall that a semigroup is \emph{completely simple} if it has no proper two-sided ideals and it contains a primitive idempotent (or, equivalently, both a minimal left and right ideal).  Rees's theorem~\cite{Rees} says that any completely simple semigroup is isomorphic to a Rees matrix semigroup $\mathcal M(G,A,B,C)$ over a group $G$.  Here $A,B$ are sets, $C\colon B\times A\to G$ is a matrix and $\mathcal M(G,A,B,C)=A\times G\times B$ with multiplication \[(a,g,b)(a',g',b') = (a,gC_{ba'}g',b').\]  It is well known~\cite{CP} that if you fix $a_0\in A$ and $b_0\in B$, then you may assume without loss of generality that $C_{b_0a}=1=C_{ba_0}$ for all $a\in A$ and $b\in B$.  In this case $C$ is said to be \emph{normalized} with respect to $a_0,b_0$.   Note that any finite simple semigroup is completely simple.

Let me now recall the definition of monoid and semigroup homology.  If $M$ is a monoid, then $\mathbb ZM$ denotes the monoid ring of $M$ and $\mathbb Z$ denotes (abusively), both the trivial left and right $\mathbb ZM$-modules.  Then, by definition, $H_n(M)=\Tor^{\mathbb ZM}_n(\mathbb Z,\mathbb Z)$.  This can be described more concretely as follows.  If $A$ is a left $\mathbb ZM$-module, then $A_M = A/\langle ma-a\mid m\in M,a\in A\rangle$.  Note that $A_M\cong \mathbb Z\otimes_{\mathbb ZM} A$.  In particular, if $P$ is a (finitely generated) projective $\mathbb ZM$-module, then $P_M$ is a (finitely generated) free abelian group.  For any projective resolution $P_{\bullet}\twoheadrightarrow \mathbb Z$ one has that $H_n(M)= H_n((P_{\bullet})_M)$.  If $S$ is a semigroup, define $H_n(S)=H_n(S^1)$ where $S^1$ is obtained by adjoining an identity to $S$.  If $M$ is a monoid, one can check that $H_n(M)\cong H_n(M^1)$ since the normalized bar resolution of $\mathbb Z$ for $M^1$ is precisely the bar resolution of $\mathbb Z$ for $M$.  Therefore, the homology of a monoid is the same viewed as either a semigroup or a monoid.  Note that there are other, more sophisticated, homology theories for monoids~\cite{Leechcohom},  but the one of interest here is the classical theory.  The cohomology of a monoid $M$ is $H^n(M)=\Ext^n_{\mathbb ZM}(\mathbb Z,\mathbb Z)$; the cohomology of a semigroup $S$ is $H^n(S^1)$.

The abelianization $G/[G,G]$ of a group $G$ is denoted $\ab G$. The main result of this paper is the following.

\begin{ThmA*}
Let $S=\mathcal M(G,A,B,C)$ be a completely simple semigroup over a group $G$.  Assume that $C$ is normalized with respect to $a_0\in A$ and $b_0\in B$, and put $\ov A=A\setminus\{a_0\}$, $\ov B=B\setminus \{b_0\}$.  Let $\psi\colon \mathbb Z[\ov A\times \ov B]\to \ab G$ be the homomorphism with $\psi(a,b) =C_{ba}[G,G]$.  Then
\[H_n(S) = \begin{cases}\mathbb Z, & \text{if}\ n=0\\ \coker \psi, & \text{if}\ n=1\\ H_2(G)\oplus \ker \psi, & \text{if}\ n=2\\ H_n(G), & \text{if}\ n\geq 3.\end{cases}\]  In particular, if $\left\langle C_{ba}[G,G]\mid (a,b)\in \ov A\times \ov B\right\rangle$ is finite (i.e., if $G$ is finite or if $C_{ba}=1$ for all $(a,b)\in \ov A\times \ov B$), then
\[H_n(S) = \begin{cases}\mathbb Z, & \text{if}\ n=0\\ \coker \psi, & \text{if}\ n=1\\ H_2(G)\oplus \mathbb Z[\ov A\times \ov B], & \text{if}\ n=2\\ H_n(G), & \text{if}\ n\geq 3\end{cases}\] since any finite index subgroup of $\mathbb Z[\ov A\times \ov B]$ is free of the same rank.
\end{ThmA*}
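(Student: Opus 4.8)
The plan is to compute $H_n(S)=H_n(S^1)$ topologically, using the classifying space $BS^1=|N(S^1)|$; by the normalization theorem this is the realization of the reduced nerve, which is the standard simplicial set attached to the semigroup $S$ itself. The heart of the argument is a single geometric statement, call it the Key Lemma: $BS^1$ is homotopy equivalent to the CW complex $X$ obtained from $BG$, the classifying space of the maximal subgroup $G$, by attaching for each $(a,b)\in\ov A\times\ov B$ a $2$-cell along a loop whose class in $\pi_1(BG)=G$ is $C_{ba}$; equivalently, $X$ is the mapping cone of the map from a wedge of $|\ov A\times\ov B|$ circles to $BG$ sending the $(a,b)$-th circle to $C_{ba}$. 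Once this is in hand the theorem is bookkeeping.

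The Key Lemma reflects the egg-box structure: the group blocks (the $\mathscr H$-classes) of $S$ are all copies of the single subgroup $G$, arranged along the complete bipartite graph on the $|A|$ minimal right ideals and the $|B|$ minimal left ideals; choosing the spanning tree dual to the normalization at $a_0,b_0$, the independent cycles of this graph are indexed by $\ov A\times\ov B$ and the $(a,b)$-th cycle carries the element $C_{ba}\in G$. So $BG$ absorbs the subgroup, and the only further cells forced are one $2$-cell per independent cycle (they are $2$-dimensional because the structure graph is $1$-dimensional, and nothing is forced in dimension $\ge 3$). To make this precise I would pass to the idempotent (Karoubi) completion of $S^1$, which has the same classifying space: its objects are $1$ together with the idempotents $e_{ab}=(a,C_{ba}^{-1},b)$; the full subcategory on the $e_{ab}$ is a connected groupoid with vertex group $G$, hence has classifying space $BG$, and one must show that adjoining $1$ and the $\Hom$-sets relating it to the $e_{ab}$ alters the homotopy type exactly by those $2$-cell attachments. (Alternatives: build an explicit CW model of $|N(S^1)|$ from the Rees coordinates and collapse it, or compare $S^1$ via Quillen's Theorem~A with a small category assembled from $G$ and the structure graph.) Pinning down this homotopy equivalence — in particular checking that the attaching loops are exactly the normalized matrix entries and that no higher cells are forced — is the main obstacle; everything afterwards is routine.

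Assume the Key Lemma. The relative cellular chain complex of $(X,BG)$ is free abelian on $\ov A\times\ov B$ concentrated in degree $2$, so $H_n(X,BG)=0$ for $n\ne 2$ and $H_2(X,BG)\cong\mathbb Z[\ov A\times\ov B]$; the connecting homomorphism $H_2(X,BG)\to H_1(BG)=\ab G$ sends the $(a,b)$-th generator to the Hurewicz image of its attaching loop, namely $C_{ba}[G,G]=\psi(a,b)$, so this connecting map is precisely $\psi$. The long exact sequence of the pair $(X,BG)$ then gives $H_0(X)=\mathbb Z$; an exact sequence $\mathbb Z[\ov A\times\ov B]\xrightarrow{\psi}\ab G\to H_1(X)\to 0$, so that $H_1(X)=\coker\psi$; a short exact sequence $0\to H_2(G)\to H_2(X)\to\ker\psi\to 0$; and isomorphisms $H_n(X)\cong H_n(G)$ for $n\ge 3$, since attaching $2$-cells leaves homology unchanged in degrees $\ge 3$. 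The short exact sequence splits because $\ker\psi$, being a subgroup of the free abelian group $\mathbb Z[\ov A\times\ov B]$, is itself free abelian, so $H_2(S)=H_2(G)\oplus\ker\psi$; this is the first displayed formula.

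For the ``in particular'' statement, suppose $\langle C_{ba}[G,G]\mid(a,b)\in\ov A\times\ov B\rangle$ is finite. This subgroup is $\image\psi$, so the quotient $\mathbb Z[\ov A\times\ov B]/\ker\psi$ is finite, hence annihilated by some $N\ge 1$; thus $N\cdot\mathbb Z[\ov A\times\ov B]\subseteq\ker\psi\subseteq\mathbb Z[\ov A\times\ov B]$, and since $N\cdot\mathbb Z[\ov A\times\ov B]\cong\mathbb Z[\ov A\times\ov B]$ while $\ker\psi$ is free abelian, $\ker\psi$ has the same rank as $\mathbb Z[\ov A\times\ov B]$ and so is isomorphic to it. Substituting into the first formula gives the second. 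The two stated sufficient conditions are immediate: if $G$ is finite then $\ab G$ is finite, and if $C_{ba}=1$ for all $(a,b)\in\ov A\times\ov B$ then $\psi=0$ and $\image\psi=0$.
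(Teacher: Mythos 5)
Your reduction of the theorem to the Key Lemma, and the homological bookkeeping that follows, are correct and coincide with the paper's own deduction: the paper's Theorem~\ref{t:main.top.version} asserts precisely your Key Lemma (a classifying space for $S^1$ homotopy equivalent to $Y=BG$ with one $2$-cell attached along a loop in the class $C_{ba}$ for each $(a,b)\in\ov A\times\ov B$), and the paper then runs the same long exact sequence of the pair, identifies the connecting map with $\psi$ via Hurewicz, and splits the $H_2$ sequence using the freeness of $\ker\psi$. Your treatment of the ``in particular'' clause is also fine.

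The problem is that the Key Lemma is essentially the entire content of the proof, and you have not proved it --- you acknowledge that pinning down the homotopy equivalence is ``the main obstacle.'' The egg-box/bipartite-graph heuristic is a reasonable guide but does not by itself establish that no higher cells are forced, nor that the attaching loops are exactly the normalized entries $C_{ba}$; and the routes you float each carry real difficulties (for instance, comparing $B(S^1)$ with the classifying space of the idempotent completion, or collapsing the very large simplicial set $N(S^1)$, both require arguments you do not supply, and a space merely homotopy equivalent to a classifying space of a monoid need not itself be one, so care is needed about what exactly is being computed). The paper sidesteps the nerve entirely: it builds a small contractible \emph{projective $M$-CW complex} $\til X$ by forming $L_{b_0}\otimes_G\til Y\cong A\times\til Y$ (with $\til Y$ the universal cover of $Y$), gluing on projective $M$-cells $M\times B^1$, $S\times B^1$ and $S\times B^2$ realizing edges $\epsilon_a$, $\eta_a$ and $2$-cells $c_a$, and verifying contractibility by collapsing a tree of copies of $\til Y$; only then does it pass to the quotient $M\backslash\til X$ and read off that the $2$-cells $d_{ba}$ are attached along $\gamma_{ba}\ov\epsilon_a^{-1}$. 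That construction, resting on the machinery of projective $M$-CW complexes, is what certifies that the resulting space computes $\Tor^{\mathbb ZM}_\ast(\mathbb Z,\mathbb Z)$, and it is exactly the step your proposal leaves open.
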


The proof of Theorem~A proceeds along the following lines.  Starting with an Eilenberg-Mac Lane space $Y$ for $G$, I'll show that the homology of $S$ can be computed as the homology of the space $X$ obtained from $Y$ by gluing in, for each $(a,b)\in \ov A\times\ov B$, a disk attached to a loop representing $C_{ba}$ in $\pi_1(Y)\cong G$.  A simple application of the long exact sequence in relative homology for the CW pair $(X,Y)$ (or  the Mayer-Vietoris sequence) then easily produces Theorem~A.  %To do this I build an equivariant classifying space $Z$ (in the sense of~\cite{TopFinite1}) for $M=S^1$ by attaching to $|A|$ copies of the universal covering space of $Y$ a number of edges and disks in an equivariant way such a way that $M\backslash Z$ is homotopy equivalent to $X$ (by contracting a subcomplex isomorphic to a wedge of disks).

\subsection*{Acknowledgments}
I would like to thank Robert D.~Gray for some stimulating email exchanges that prompted me to investigate the exact conditions under which homological finiteness properties hold in complete generality.

\section{Equivariant classifying spaces and Pride's theorem}
The notions in this section were introduced in~\cite{TopFinite1} for monoids.  The analogous theory for groups is classical, cf.~\cite{Browncohomology}.
Fix a monoid $M$.   I shall denote by $B^n$ the topological $n$-ball and $S^{n-1}$ its boundary sphere.  Note that $B^0$ is a point and $S^{-1}=\emptyset$.  If $A$ is a left $M$-set and $X$ is a topological space, then $A\times X$ is an $M$-space with action $m(a,x) = (ma,x)$ for $a\in A$ and $x\in X$.  So $A\times X$ is a disjoint union of $|A|$ copies of $X$, which are rigidly shuffled about by  $M$ according to the action of $M$ on $A$.

 A \emph{projective $M$-cell} of dimension $n$ is an $M$-space of the form $Me\times B^n$ with $e\in M$ idempotent.  If $e=1$, I call it a \emph{free $M$-cell}.  Any projective $G$-cell for a group $G$ is a free $G$-cell.    A \emph{projective} (free) $M$-CW complex is a CW complex built by attaching projective (free) $M$-cells, that is,  $X=\varinjlim X_n$ where $X_{-1}=\emptyset$ and $X_n$ is obtained from $X_{n-1}$  as a pushout
\begin{equation}\label{eq:pushout}
\begin{tikzcd}\coprod_{a\in A_n} Me_a\times S^{n-1}\ar{r}{\Psi_n}\ar[d, hook] & X_{n-1}\ar[d, hook]\\ \coprod_{a\in A_n} Me_a\times B^n\ar{r} & X_n\end{tikzcd}
\end{equation}
 where $\Psi_n$ is $M$-equivariant and continuous (and all $e_a=1$ in the free case).

If $X$ is an $M$-space, then $M\backslash X$ is the quotient of $X$ by the smallest equivalence relation identifying $x$ and $mx$ for all $x\in X$ and $m\in M$.  If $X$ is a projective $M$-CW complex, then $M\backslash X$ is a CW complex with $(M\backslash X)_n=M\backslash X_n$.  Note that  $M\backslash X_0=A_0$ and $M\backslash X_n$ is obtained from $M\backslash X_{n-1}$ by attaching an $n$-cell for each $a\in A_n$ via the composition $S^{n-1}\to e_a\times S^{n-1}\to X_{n-1}\to M\backslash X_{n-1}$.  In particular, $M\backslash X$ has $|A_n|$ $n$-cells.

From this it follows that the cellular chain complex for $X$ is given as a $\mathbb ZM$-module by $C_n(X)=\bigoplus_{a\in A_n} \mathbb ZMe_a$ and  $C_n(M\backslash X) = C_n(X)_M$ since $(\mathbb ZMe_a)_M\cong \mathbb Z[M\backslash Me_a]$ is a free abelian group on the coset of $e_a$.
Note that $\mathbb ZMe$ is a projective $\mathbb ZM$-module if $e\in M$ is an idempotent as $\mathbb ZM=\mathbb ZMe\oplus \mathbb ZM(1-e)$.

An \emph{equivariant classifying space} for $M$ is a contractible projective $M$-CW complex $X$.  Equivariant classifying spaces are unique up to $M$-homotopy equivalence~\cite[Theorem~6.3]{TopFinite1} (which I do not define here).    Note that the augmented cellular chain complex $C_{\bullet}(X)\twoheadrightarrow \mathbb Z$ of $X$ is a projective resolution of $\mathbb Z$.  Hence $H_n(M)\cong H_n(M\backslash X)$ and $H^n(M)=H^n(M\backslash X)$ by the previous paragraph.  A space $Y=M\backslash X$ with $X$ an equivariant classifying space for $M$ is called a \emph{classifying space} for $M$.  Any two classifying spaces for $M$ are homotopy equivalent~\cite[Corollary~6.7]{TopFinite1}.  If $G$ is a group, then a classifying space for $G$ is the same thing as an Eilenberg-Mac Lane space for $G$, and an equivariant classifying space is a universal cover of an Eilenberg-Mac Lane space.   Unlike the case of groups, a space homotopy equivalent to a classifying space for a monoid $M$ need not be a classifying space.  There is a standard classifying space $BM$ for $M$ obtained by taking the nerve of $M$, viewed as a one-object category.  The corresponding equivariant classifying space is $EM$, the nerve of the category of elements of the $M$-set $M$.  The cellular chain complex of $EM$ is the normalized bar resolution of $\mathbb Z$.

A monoid $M$ satisfies the \emph{topological finiteness property} (left) $\mathrm F_n$ if it has a classifying space with a finite $n$-skeleton (see~\cite[Section~6.1]{TopFinite1}).   One says $M$ satisfies the homological finiteness property (left) $\mathrm{FP}_n$ if $\mathbb Z$ has a projective resolution (as a left $\mathbb ZM$-module) that is finitely generated through degree $n$.  Note that $\mathrm F_n$ implies $\mathrm{FP}_n$, but not conversely.  Every finitely presented monoid is of type $\mathrm F_2$~\cite[Theorem~6.14]{TopFinite1} and, for monoids of type $\mathrm F_2$, the properties $\mathrm{FP}_n$ and $\mathrm{F}_n$ are equivalent~\cite[Theorem~6.15]{TopFinite1}.

If $X$ is a simply connected projective $M$-CW complex, then one can always embed $X$ in an equivariant classifying space $Y$ such that $Y_2=X_2$; see~\cite[Lemma~6.4]{TopFinite1}. If $M$ is given by a presentation $\langle X\mid R\rangle$, then the \emph{Cayley complex}  of this presentation is a simply connected free $M$-CW complex.  To describe it,   first note that the Cayley graph of $M$ with respect to $X$ is a free $M$-CW complex with a single free $M$-cell $M$ of vertices and, for each generator $x\in X$, it has a free $M$-cell $M\times B^1$ with $m\times B^1$ attached as the edge from $m$ to $mx$ labelled by $x$.
The Cayley complex $K_{X,R}$ is obtained from the Cayley graph by attaching a free $M$-cell $M\times B^2$ for each relation $u_i=v_i$ of $R$ with the boundary path of $m\times B^2$ the loop at $m$ labelled by  $u_iv_i\inv$.  The proof of~\cite[Theorem~6.14]{TopFinite1} shows that $K_{X,R}$ is simply connected.

Notice that $M\backslash K_{X,R}$ has a single vertex, $|X|$ loops at that vertex and $|R|$ $2$-cells attached by the loops labeled $u_iv_i\inv$ with $u_i=v_i$ a relation in $R$.  This is nothing more than the presentation complex for the \emph{group completion} $G(M)$ of $M$, that is, the group defined by the same presentation $\langle X\mid R\rangle$ viewed as a group presentation (this group doesn't depend on the presentation of $M$: it's the universal group receiving a homomorphism from $M$).  I'll now prove topologically Pride's unpublished theorem.  If $A$ is a finitely generated abelian group, then $\rk A =\dim_{\mathbb Q}\mathbb Q\otimes_{\mathbb Z} A$, i.e., the rank of the free abelian part of $A$.  Write $d(A)$ for the minimum number of generators of $A$.

\begin{Thm}[Pride]
Let $\langle X\mid R\rangle$ be a finite presentation of a monoid $M$.  Then $|R|-|X|\geq d(H_2(M))-\rk \ab{G(M)}$ where $G(M)$ is the group presented by this presentation.  The same result holds for semigroups.
\end{Thm}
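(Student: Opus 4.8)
\emph{Proof proposal.} The plan is to realize the first three terms of a projective resolution of $\mathbb Z$ over $\mathbb ZM$ by means of an equivariant classifying space built from the Cayley complex of the given presentation, and then to extract the inequality purely by counting ranks in the resulting chain complex of finitely generated free abelian groups.

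First I would take the Cayley complex $K=K_{X,R}$, which as recalled above is a simply connected free $M$-CW complex with one free $M$-cell of vertices, one free $M$-cell $M\times B^1$ for each $x\in X$, and one free $M$-cell $M\times B^2$ for each relation in $R$. By~\cite[Lemma~6.4]{TopFinite1} I would embed $K$ in an equivariant classifying space $Y$ for $M$ with $Y_2=K_2$. Then the augmented cellular chain complex $C_\bullet(Y)\twoheadrightarrow\mathbb Z$ is a projective resolution, so $H_n(M)\cong H_n(M\backslash Y)=H_n(C_\bullet(Y)_M)$; and since $C_n(Y)=C_n(K)$ is free of rank $1,|X|,|R|$ for $n=0,1,2$ while $(\mathbb ZM)_M\cong\mathbb Z$, the complex $C_\bullet(M\backslash Y)$ begins
\[\cdots\longrightarrow C_3\xrightarrow{\ \partial_3\ }\mathbb Z^{|R|}\xrightarrow{\ \partial_2\ }\mathbb Z^{|X|}\xrightarrow{\ \partial_1\ }\mathbb Z\longrightarrow 0 .\]
Here $\partial_1=0$ (indeed $H_0(M)=\mathbb Z$ forces $\image\partial_1=0$; geometrically, the edges of $M\backslash Y$ are loops at its single vertex), and $(M\backslash Y)_2=M\backslash K$ is the presentation $2$-complex of $G(M)$, so $\pi_1(M\backslash Y)=G(M)$ and hence $H_1(M)\cong H_1(M\backslash Y)\cong\ab{G(M)}$; in particular $\image\partial_2$ fits into a short exact sequence $0\to\image\partial_2\to\mathbb Z^{|X|}\to\ab{G(M)}\to 0$.

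The rest is bookkeeping. Since $H_2(M)\cong\ker\partial_2/\image\partial_3$ is a quotient of $\ker\partial_2$, and $\ker\partial_2$, being a subgroup of $\mathbb Z^{|R|}$, is free abelian of finite rank, we get $d(H_2(M))\le d(\ker\partial_2)=\rk\ker\partial_2$. Tensoring with $\mathbb Q$ the exact sequences $0\to\ker\partial_2\to\mathbb Z^{|R|}\to\image\partial_2\to0$ and $0\to\image\partial_2\to\mathbb Z^{|X|}\to\ab{G(M)}\to0$ gives $\rk\ker\partial_2=|R|-\rk\image\partial_2$ and $\rk\image\partial_2=|X|-\rk\ab{G(M)}$. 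Combining, $d(H_2(M))\le|R|-|X|+\rk\ab{G(M)}$, which is the claimed bound. For a semigroup $S$ with semigroup presentation $\langle X\mid R\rangle$, the same data is a monoid presentation of $S^1$ (the relators, being elements of the free semigroup, are nonempty words, so the class of the empty word is identified with nothing), and $G(S)=G(S^1)$ since both are the universal group receiving a homomorphism from $S$; applying the monoid case to $S^1$ and using $H_n(S)=H_n(S^1)$ finishes the proof.

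The one nontrivial ingredient is~\cite[Lemma~6.4]{TopFinite1}, which refines the Cayley complex to an equivariant classifying space without altering the $2$-skeleton; once that is granted the argument is essentially forced. The only points needing a little care are that $C_3$ in the resolution may fail to be finitely generated (this is harmless, since $H_2(M)$ is still a quotient of the finitely generated group $\ker\partial_2$ — which, incidentally, shows $d(H_2(M))<\infty$) and the identification $H_1(M)\cong\ab{G(M)}$, which in this setup is immediate from the fact that $M\backslash Y$ and the presentation complex of $G(M)$ share a $2$-skeleton.
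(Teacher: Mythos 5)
Your proposal is correct and follows essentially the same route as the paper: build the Cayley complex of the presentation, extend it to an equivariant classifying space with the same $2$-skeleton via~\cite[Lemma~6.4]{TopFinite1}, identify the quotient $2$-skeleton with the presentation complex of $G(M)$ to get $H_1(M)\cong\ab{G(M)}$, and observe that $H_2(M)$ is a quotient of the finitely generated free abelian group of $2$-cycles. The only cosmetic difference is that the paper packages the final rank bookkeeping as an Euler characteristic computation of the $2$-skeleton, whereas you run the equivalent rank-nullity count on the boundary maps directly.
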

\begin{proof}
Notice that viewing a semigroup presentation of $S$ as a monoid presentation, yields a monoid presentation of $S^1$.  Also note that the group completions $G(S)$ and $G(S^1)$ coincide.  Thus it suffices to prove the result for monoid presentations.  Let $K_{X,R}$ be the Cayley complex of $M$ with respect to the presentation $\langle X\mid R\rangle$.  By~\cite[Lemma~6.4]{TopFinite1}, there is an equivariant classifying space $\til Y$ for $M$ with $\til Y_2=K_{X,R}$, as $K_{X,R}$ is a simply connected free $M$-CW complex.  Let $Y=M\backslash \til Y$, and note that $Y_2$ is the presentation complex for $G(M)$ with respect to the group presentation $\langle X\mid R\rangle$, as discussed above. Then $H_1(M) = H_1(Y)=H_1(Y_2)=\ab{G(M)}$ by the Hurewicz theorem, since $G(M)=\pi_1(Y_2)$.  Also, $H_2(Y_2)$ is precisely the group $Z_2(Y)$ of $2$-cycles of $Y$, which moreover is finitely generated free abelian since $C_2(Y)$ is free abelian of rank $|R|$.  Since $H_2(M)=H_2(Y)=Z_2(Y)/B_2(Y)=H_2(Y_2)/B_2(Y)$, we deduce that $d(H_2(M))\leq \rk H_2(Y_2)$.  Then we perform the Euler characteristic computation
\begin{equation}\label{eq:eulerchar1}
\chi(Y_2) = 1-|X|+|R|
\end{equation}
 (as $Y_2$ is the presentation complex of $G(M)$).  But also
\begin{equation}\label{eq:eulerchar2}
\begin{split}
\chi(Y_2) &= 1-\rk H_1(Y_2)+\rk H_2(Y_2) \\ &= 1-\rk \ab{G(M)}+\rk H_2(Y_2)\geq 1-\rk \ab{G(M)}+d(H_2(M)).
\end{split}
\end{equation}
Comparing \eqref{eq:eulerchar1} and \eqref{eq:eulerchar2} establishes the theorem.
\end{proof}

The proof shows that if $K$ is a classifying space for $M$, then $\pi_1(K)\cong G(M)$ (as was already observed in~\cite{TopFinite1}).
Of course, if $M$ is finite, or more generally is finitely generated and has no nontrivial homomorphisms to $\mathbb Z$, then $\ab{G(M)}$ is a torsion group, and so $\rk \ab{G(M)}=0$.  This yields the following corollary.

\begin{Cor}[Pride]
Let $\langle X\mid R\rangle$ be a finite presentation of a finite monoid (or a monoid with no nonntrivial homomorphisms to $\mathbb Z$).  Then  the inequality $|R|-|X|\geq d(H_2(M))$ holds.  An analogous result holds for semigroups.
\end{Cor}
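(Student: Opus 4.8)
The plan is to deduce this at once from the preceding Theorem (Pride), whose inequality reads $|R|-|X|\geq d(H_2(M))-\rk\ab{G(M)}$; so it is enough to check that $\rk\ab{G(M)}=0$ under either hypothesis. Since $\langle X\mid R\rangle$ is finite, $M$ is finitely generated, hence so is $G(M)$, and therefore $\ab{G(M)}$ is a finitely generated abelian group. For such a group, $\rk\ab{G(M)}=0$ is equivalent to $\Hom(\ab{G(M)},\mathbb Z)=0$ (a finitely generated abelian group admitting no homomorphism onto $\mathbb Z$ is torsion). Finally, $\Hom(\ab{G(M)},\mathbb Z)$ is naturally identified with the set of monoid homomorphisms from $M$ into the additive group $\mathbb Z$: any such homomorphism $M\to\mathbb Z$ factors through $G(M)$ by the universal property of the group completion, and then through $\ab{G(M)}$ because $\mathbb Z$ is abelian.

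So the claim reduces to the assertion that $M$ has no nontrivial homomorphism to $\mathbb Z$. When this is assumed outright there is nothing to do. When $M$ is finite, given $\varphi\colon M\to\mathbb Z$ and $m\in M$, finiteness yields $m^i=m^j$ for some $i<j$, so $(j-i)\varphi(m)=0$ in $\mathbb Z$ and hence $\varphi(m)=0$; thus $\varphi$ is trivial. In either case $\rk\ab{G(M)}=0$, and the Theorem gives $|R|-|X|\geq d(H_2(M))$.

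For the semigroup version I would pass to $S^1$: a semigroup presentation of $S$ with $|X|$ generators and $|R|$ defining relations is a monoid presentation of $S^1$ with the same data, $G(S)=G(S^1)$, and $H_2(S)=H_2(S^1)$ by definition; moreover $\Hom(S^1,\mathbb Z)=\Hom(S,\mathbb Z)$ (restrict, resp.\ extend by $1\mapsto 0$), and $S$ finite forces $S^1$ finite. Applying the monoid statement to $S^1$ then finishes the proof.

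There is no real obstacle here: the substance is entirely in the preceding Theorem, and this corollary is a routine bookkeeping step. The only mildly delicate points — that homomorphisms $M\to\mathbb Z$ correspond to homomorphisms $\ab{G(M)}\to\mathbb Z$, and that finiteness of $M$ precludes nonzero homomorphisms to $\mathbb Z$ — are both elementary.
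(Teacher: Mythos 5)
Your proposal is correct and matches the paper's argument, which simply observes before the corollary that for a finitely generated monoid with no nontrivial homomorphisms to $\mathbb Z$ (in particular a finite one), $\ab{G(M)}$ is torsion, so $\rk\ab{G(M)}=0$ and the preceding theorem applies. Your write-up just fills in the elementary details (the identification of $\Hom(\ab{G(M)},\mathbb Z)$ with monoid homomorphisms $M\to\mathbb Z$, and the eventual-periodicity argument for finite $M$) that the paper leaves implicit.
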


For later purposes I shall need  two more constructions from~\cite{TopFinite1} that allow us to build projective $M$-CW complexes.  If $X$ is a projective $M$-CW complex, then a projective $M$-CW subcomplex of $X$ is a subcomplex $Y$ that is a union of projective $M$-cells.  Retaining the notation of \eqref{eq:pushout}, this means that, for each $n\geq 0$, there is a partition $A_n=A_n'\coprod A_n''$ so $Y_n$ is obtained from $Y_{n-1}$ by attaching $\coprod_{a\in A_n'} Me_a\times B^n$ by the restriction of $\Psi_n$.  If $X$ and $Z$ are projective $M$-CW complexes, $Y$ is a projective $M$-CW subcomplex of $X$ and $f\colon Y\to Z$ is an $M$-equivariant cellular map, then it is proved in~\cite[Lemma~2.1]{TopFinite1} that the adjunction space $X\coprod_f Z$ obtained by gluing $Y$ to $Z$ via $f$ is a projective $M$-CW complex.

If $M,N$ are two monoids, $X$ is a left $N$-space and $A$ is an $M$-$N$-biset, then $A\otimes_N X$ is the quotient of $A\times X$ by the smallest equivalence relation $\sim$ such that $(an,x)\sim (a,nx)$ for all $a\in A$, $n\in N$ and $x\in X$.  The class of $(a,x)$ is denoted $a\otimes x$.  Note that $A\otimes_N X$ is a left $M$-space with action $m(a\otimes x) = ma\otimes x$.  If $A$ is a free right $N$-set with basis $B$, then $A\otimes_N X\cong B\times X$ via $bn\otimes x\mapsto (b,nx)$.  Here $B$ is a basis if the natural map $B\times N\to A$ sending $(b,n)$ to $bn$ is an isomorphism of right $N$-sets (where $(b,n)n'=(b,nn')$).  It is shown in~\cite[Corollary~3.2]{TopFinite1} that if $A$ is an $M$-$N$-biset that is projective as a left $M$-set (i.e., $A\cong \coprod_{b\in B}Me_b$ with $e_b\in M$ idempotent) and $Y$ is a projective $N$-CW complex, then $A\otimes_N Y$ is a projective $M$-CW complex with $(A\otimes_N Y)_n=A\otimes_N Y_n$. % Moreover, if $A$ is finitely generated projective as a left $M$-set and $N\backslash Y_n$ is a finite CW complex, then $M\backslash (A\otimes_N Y_n)$ is a finite CW-complex.

\section{Homology of completely simple semigroups}
It is now high time to prove Theorem~A.   First I will prove the topological version of Theorem~A, which will also allow me to connect the topological finiteness properties of a completely simple semigroup with those of its maximal subgroup.

\begin{Thm}\label{t:main.top.version}
Let $S=\mathcal M(G,A,B,C)$ be a Rees matrix semigroup over the group $G$ and assume that $C$ is normalized with respect to $a_0\in A$ and $b_0\in B$.  Put $\ov A=A\setminus \{a_0\}$, $\ov B=B\setminus \{b_0\}$ and set $M=S^1$.  Let $Y$ be a classifying space for $G$.  Fix a base point $y_0\in Y$ and choose a combinatorial loop $\gamma_{ba}\colon S^1\to Y$ at $y_0$ representing $C_{ba}$ in $\pi_1(Y,y_0)=G$, for each $(a,b)\in \ov A\times \ov B$ (with $\gamma_{ba}$ the constant path if $C_{ba}=1$).  Let  $f=\coprod_{(a,b)\in \ov A\times \ov B}\gamma_{ba}\colon \coprod_{(a,b)\in \ov A\times \ov B}S^1\to Y$.   Then there is a classifying space $X$ for $M$ that is homotopy equivalent to the adjunction space $(\coprod_{(a,b)\in \ov A\times \ov B} B^2)\coprod_f Y$ via a homotopy equivalence contracting a subcomplex isomorphic to $\bigvee_{a\in\ov A} B^2$.
\end{Thm}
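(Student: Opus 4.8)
The plan is to build an explicit equivariant classifying space $\widetilde X$ for $M = S^1$ whose orbit space $X = M\backslash\widetilde X$ is the desired adjunction space, then identify the collapsible subcomplex. The starting point is a concrete model for an equivariant classifying space $E$ for the group $G$ (say $E = EG$), so $Y = G\backslash E$ is a classifying space for $G$. The key structural fact to exploit is that $M = S^1$, with $S = \mathcal M(G,A,B,C)$, decomposes via idempotents: fixing $a_0, b_0$, the element $e_0 = (a_0, 1, b_0)$ is idempotent, $e_0 M e_0 \cong G$, and the left $M$-set $Me_0$ is (roughly) $A$ copies of $G$ worth of stuff. More precisely, I would use the $M$–$G$-biset $Me_0$, which is projective as a left $M$-set, to induce up: by \cite[Corollary~3.2]{TopFinite1}, $Me_0 \otimes_G E$ is a projective $M$-CW complex. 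The first step is to check that $Me_0 \otimes_G E$ is contractible — it is a disjoint union of $|A|$ copies of the contractible space $E$, permuted by $M$ — hence it is an equivariant classifying space for $M$ only after we also kill the extra path components, i.e.\ it is $|A|$-fold and we must glue in cells to connect and contract it. Here is where $\overline A$ enters: I expect $Me_0 \otimes_G E$ to have one $M$-orbit of components for each $a \in A$, and to make it contractible I attach, for each $a \in \overline A$, a free $M$-cell $M \times B^1$ connecting the $a_0$-component to the $a$-component (an "equivariant edge"), and similarly free $2$-cells to kill the resulting $\pi_1$. Taking orbits, this glued-in part contributes exactly a wedge $\bigvee_{a\in\overline A} B^2$ (a tree of edges plus filling disks, which is contractible), and this is the subcomplex that the homotopy equivalence will contract.

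The second and central step is to understand what the matrix $C$ contributes. The subtlety is that $Me_0 \otimes_G E$, with its components connected up as above, is an equivariant classifying space $\widetilde X_0$ for $M$ whose orbit space $X_0 = M\backslash \widetilde X_0$ is homotopy equivalent to $Y$ itself (collapse the tree); but this does not yet see $C$. In fact $X_0$ is a classifying space for $M$ already when $C$ is trivial — which matches Theorem~A in the case $C_{ba} = 1$. To get the general matrix in, I would instead be more careful about the equivariant $1$-cells: for $(a,b) \in \overline A \times \overline B$, the "right multiplication structure" of $M$ forces the attaching maps in $Me_0 \otimes_G E$ to wind according to $C_{ba}$. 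The cleanest route: rather than inducing the $1$-skeleton abstractly, directly build $\widetilde X$ as a projective $M$-CW complex with $\widetilde X_2$ the Cayley complex $K_{X,R}$ of a natural presentation of $M$ (adjoining $(a_0, g, b_0)$-type generators for $G$ plus generators $(a_0,1,b)$ and $(a,1,b_0)$ and reading off relations), and appeal to \cite[Lemma~6.4]{TopFinite1} to complete $K_{X,R}$ to an equivariant classifying space $\widetilde X$ with $\widetilde X_2 = K_{X,R}$. Then $X = M\backslash\widetilde X$ has $X_2 = M\backslash K_{X,R} = $ the presentation complex of $G(M)$. Since $G(M)$ is the group with the presentation $\langle X\mid R\rangle$ read as a group presentation, and one computes $G(M) \cong G/\langle C_{ba} : (a,b)\in\overline A\times\overline B\rangle$, the presentation complex of $G(M)$ is exactly $Y$ with a disk glued along each $\gamma_{ba}$ and a tree of extra edges (coming from the $(a,1,b_0)$ generators) with filling disks collapsing to $\bigvee_{a\in\overline A}B^2$. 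That identifies $X_2$ with the required adjunction space up to the collapsible wedge.

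The third step is to upgrade from $X_2$ to $X$: I must check the classifying space $X$ has no essential higher cells beyond those of $Y$ and the glued disks — equivalently, that the adjunction space $(\coprod B^2)\coprod_f Y$ is already aspherical-in-the-monoid-sense, i.e.\ is a classifying space for $M$ and not merely homotopy equivalent to one. This is where I expect the main obstacle. One cannot just invoke homotopy equivalence (the excerpt warns that spaces homotopy equivalent to a classifying space for a monoid need not be classifying spaces). The resolution should be: build $\widetilde X$ so that above dimension $2$ it is, orbit-wise, just a copy of $E$ induced up via $Me_0\otimes_G(-)$ glued to the $2$-skeleton along $\widetilde X_2$; since $E$ in high dimensions has cells that are free $G$-cells, inducing gives free $M$-cells, and the resulting $M\backslash\widetilde X$ agrees with $G\backslash E = Y$ above dimension $2$ after collapsing the tree — i.e.\ the higher cells of $X$ are precisely the higher cells of $Y$. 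Concretely I would realize $\widetilde X$ as an adjunction $\widetilde X = \big(\coprod_{(a,b)} M\times B^2\big)\coprod_{\widetilde f} \widetilde X_0$ where $\widetilde X_0$ is the equivariant classifying space built from $E$ above, $\widetilde f$ lifts $f$, verify via \cite[Lemma~2.1]{TopFinite1} that this is a projective $M$-CW complex, check contractibility of $\widetilde X$ (using that $\widetilde X_0$ is contractible and we are coning off loops that become nullhomotopic — a Mayer–Vietoris / van Kampen argument equivariantly, or simpler, since attaching equivariant $2$-cells along $M\times S^1$ to a contractible complex to kill $\pi_1$-classes of orbit loops keeps it contractible once we also confirm $\pi_2$ is unaffected, which holds because the attached cells are in dimension $2$). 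Then $X = M\backslash\widetilde X = (\coprod B^2)\coprod_f Y'$ where $Y' = M\backslash\widetilde X_0$ is $Y$ with the collapsible tree-of-disks wedge, giving the statement. The verification of contractibility of $\widetilde X$ and the bookkeeping that the orbit space is literally the claimed adjunction space (with precisely the wedge $\bigvee_{a\in\overline A}B^2$ as collapsible subcomplex, not something larger) is the delicate part; everything else is assembling the cited lemmas.
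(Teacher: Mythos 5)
Your overall skeleton---induce the universal cover $\til Y$ up along the $M$--$G$-biset $L_{b_0}=Me_0$ to get $A\times\til Y$, join the $A$-components by free edges $M\times B^1$ indexed by $\ov A$, attach $2$-cells, and pass to the quotient---is the paper's approach in outline. But the step where the matrix $C$ actually enters is missing, and your concrete proposal for it would fail. You propose to attach the $2$-cells as \emph{free} cells $M\times B^2$, one per $(a,b)\in\ov A\times\ov B$. A free $2$-cell's equivariant attaching map is determined by where $1\times S^1$ goes, which must be a closed loop in the $1$-skeleton; but at the identity level the candidate boundary $\til\gamma_{ba}\,\epsilon_a\inv$ is not closed ($\til\gamma_{ba}$ ends at $C_{ba}\til y_0$ in the $a_0$-component, while $\epsilon_a$ ends at $(a,1,b_0)\otimes\til y_0$ in the $a$-component), and no identity-level loop can have translates that ``wind according to $C_{ba}$,'' since the twisting depends on the $B$-coordinate of the translating element and the adjoined identity has none. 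The paper instead attaches, for each $a\in\ov A$, a \emph{projective} cell $S\times B^2=\coprod_{b\in B}L_b\times B^2$ (together with auxiliary edges $S\times B^1$ mapped to the translates of $\til\gamma_{ba}$): the generator $e_b\times B^2$ sees $C_{ba}$ because $e_b\epsilon_a$ is an edge from $(a_0,1,b_0)$ to $(a_0,C_{ba},b_0)$, so $e_b\til\gamma_{ba}\cdot(e_b\epsilon_a)\inv$ \emph{is} a loop. This also fixes the cell count: the quotient acquires $|\ov A|\cdot|B|$ two-cells, of which the $b=b_0$ ones form the collapsible wedge $\bigvee_{a\in\ov A}B^2$ and the $b\in\ov B$ ones become the disks along the $\gamma_{ba}$. (Your step-2 detour through the Cayley complex and \cite[Lemma~6.4]{TopFinite1} does not repair this: that lemma gives no control over the quotient above dimension $2$, and $M\backslash K_{X,R}$ is the presentation complex of $G(M)$, not the given classifying space $Y$ with disks attached.)

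The second gap is your contractibility argument. Attaching equivariant $2$-cells along loops that are null-homotopic in a contractible complex does \emph{not} preserve contractibility---each such cell contributes a $2$-sphere to $\pi_2$---so ``$\pi_2$ is unaffected because the attached cells are in dimension $2$'' is false. The correct argument is a collapse: the subcomplex $W=A\times\til Y\cup\{\epsilon_a\mid a\in\ov A\}$ is contractible (a tree of copies of the contractible $\til Y$), and after contracting it each $2$-cell $sc_a$ has boundary path $(s\epsilon_a)\inv$, i.e.\ it runs exactly once over a distinct free edge; the result is a wedge of disks, hence contractible. Your setup does not exhibit the free faces that make this collapse possible, so contractibility of $\til X$ is not established.
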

\begin{proof}
Let $\til Y$ be the universal covering space of $Y$.  Note that $\til Y$ is a free contractible $G$-CW complex. Fix a preimage $\til y_0\in \til Y$ of the base point $y_0\in Y$.   For each $(a,b)\in \ov A\times \ov B$, let $\til \gamma_{ba}$ be the lift of $\gamma_{ba}$ beginning at $\til y_0$, and hence ending at $C_{ba}\til y_0$.  If $C_{ba}=1$, then $\til\gamma_{ba}$ is the constant path at $\til y_0$. Let us also put $\til \gamma_{b_0a}$ equal to the constant path at $\til y_0$ for $a\in \ov A$.

 If $b\in B$, let $L_b = A\times G\times \{b\}$.  Note that $e_b = (a_0,1,b)$ is an idempotent and $L_b=Me_b$.  Also, $G$ acts freely on the right of $L_b$ via the rule $(a,g,b)g' = (a,gg',b)$, and this action commutes with the left action of $M$.  Thus $L_b$ is an $M$-$G$-biset that is projective as a left $M$-set and free as a right $G$-set with basis $\{(a,1,b)\mid a\in A\}$ in bijection with $A$.  It follows that $L_{b_0}\otimes_G \til Y$ is a projective $M$-CW complex (by~\cite[Corollary~3.2]{TopFinite1}) that is homeomorphic to $A\times \til Y$ (with $A$ discrete)  via $(a,g,b_0)\otimes y\mapsto (a,gy)$.  It will be convenient to identify $\til Y$ with the copy $e_{b_0}\otimes \til Y$ (i.e., $a_0\times \til Y$) of $\til Y$ via $y\mapsto e_{b_0}\otimes y$.

I now construct a second projective $M$-CW complex $Z$ that I'll glue to $L_{b_0}\otimes_G \til Y$ along a projective $M$-CW subcomplex. Note that $S=\coprod_{b\in B} L_b$ is a projective left $M$-set.  Take $Z_0=L_{b_0}$.  To build $Z_1$ attach, for each $a\in \ov A$ and $m\in M$, an edge $m\epsilon_a\colon me_{b_0}\to m(a,1,b_0)$ and an additional edge $s\eta_a\colon se_{b_0}\to s(a,1,b_0)$ for each $s\in S$. This amounts to attaching $\coprod_{a\in \ov A} M\times B^1$ and $\coprod_{a\in \ov A}S\times B^1$. The notation is chosen so that $m'\cdot (m\epsilon_a)=(m'm)\epsilon_a$ and $m'\cdot (s\eta_a) = (m's)\eta_a$.   We then attach $\coprod_{a\in \ov A} S\times B^2$ by attaching, for each $s\in S$ and $a\in \ov A$, a $2$-cell $sc_a$ with boundary path $s\eta_a(s\epsilon_a)\inv$.  Again, $m\cdot sc_a=(ms)c_a$.   Note that the subcomplex $Z'$ consisting of all the vertices and all the edges of the form $s\eta_a$ is a projective $M$-CW subcomplex.  We can define an $M$-equivariant cellular map $F\colon Z'\to L_{b_0}\otimes_G \til Y$ by $F(s)=s\til y_0$ (i.e., $s\otimes \til y_0$) on vertices $s\in L_{b_0}$ and $F((a',g,b)\eta_a) = (a',g,b)\til \gamma_{ba}$.  This is well defined because $(a',g,b)\eta_a$ is an edge from $(a',g,b_0)$ to $(a',gC_{ba},b_0)$ and $(a',g,b)\til \gamma_{ba}$ is  path from $(a',g,b)\otimes y_0=(a',g,b)\til y_0$ to $(a',g,b)\otimes C_{ba}\til y_0 = (a',gC_{ba},b_0)\otimes \til y_0$.  It is clearly $M$-equivariant and cellular.

Let $\til X=Z\coprod_F (L_{b_0}\otimes_G \til Y)$; it is a projective $M$-CW complex by~\cite[Lemma~2.1]{TopFinite1}.  I claim it is contractible.  Observe that $\til X$ is obtained from $L_{b_0}\otimes_G \til Y$ by attaching edges $m\epsilon_a\colon m\til y_0\to m(a,1,b_0)\til y_0$ for $a\in \ov A$ and $m\in M$ and attaching a $2$-cell $(a',g,b)c_a$ with boundary path $(a',g,b)(\til\gamma_{ba}\epsilon_a\inv)$ for each  for $a\in \ov A$, $(a',g,b)\in S$.  Note that $(a',g,b)\epsilon_a$ is an edge $(a',g,b)\til y_0\to (a',g,b)((a,1,b_0)\otimes \til y_0)=(a',g,b_0)\otimes C_{ba}\til y_0$, so that this makes sense.

From now on it will be convenient to identify $L_{b_0}\otimes_G \til Y$ with $A\times \til Y$ (with $a_0\times \til Y$ identified with $\til Y$).  The edge $\epsilon_a$ goes from $(a_0,\til y_0)$ to $(a,\til y_0)$. Thus the subcomplex $W$ consisting of $A\times \til Y\cup \{\epsilon_a\mid a\in \ov A\}$ is obtained from attaching to each vertex of a star graph with $|\ov A|$-edges a copy of $\til Y$ at $\til y_0$.  The star graph is a tree, and so can be contracted without changing the homotopy type of $W$.  Thus $W$ is homotopy equivalent to a wedge of $|A|$ copies of $\til Y$ and hence is also contractible as $\til Y$ is contractible.  So $W$ can be contracted  without changing the homotopy type of $\til X$.   Doing so leaves us with a wedge of disks consisting of the single vertex to which $W$ is contracted, the edges $s\epsilon_a$ with $s\in S$ and $a\in \ov A$ and the $2$-cells $sc_a$ with $s\in S$ and $a\in \ov A$, which have respective boundary paths $s\epsilon_a\inv$ after contracting $W$.  But a wedge of disks is contractible.  Thus $\til X$ is an equivariant classifying space for $M$.

Next I'll describe the classifying space $X=M\backslash \til X$.  Note that if $D$ is an $M$-set, then $M\backslash D\cong \{\ast\}\otimes_M D$, where $\{\ast\}$ the trivial right $M$-set.  Thus associativity of tensor products yields $M\backslash (L_{b_0}\otimes_G \til Y)\cong (M\backslash L_{b_0})\otimes_G \til Y\cong \{\ast\}\otimes_G \til Y \cong G\backslash \til Y\cong Y$ as $|M\backslash L_{b}|=1$ for any $b\in B$.  Note that $M\backslash S=\coprod_{b\in B}M\backslash L_b$ is in bijection with $B$.  Since tensor products commute with colimits, $M\backslash \til X$ is obtained from $Y$ by gluing in a loop $\ov \epsilon_a$ at $y_0$ for each $a\in \ov A$ and $2$-cells $d_{ba}$, for $a\in \ov A$, $b\in B$ where $d_{b_0a}$ has boundary path $\ov \epsilon_a\inv$ and $d_{ba}$ has boundary path $\gamma_{ba}\ov \epsilon_a\inv$ if $b\in \ov B$.  The subcomplex consisting of the base point, the loops $\ov \epsilon_a$ and the $2$-cells $d_{b_0a}$ is a wedge of $|\ov A|$ disks.  Contracting this wedge yields a CW complex  homotopy equivalent to $X$ that is precisely  the adjunction space $(\coprod_{(a,b)\in \ov A\times \ov B} B^2)\coprod_f Y$, completing the proof.
\end{proof}

Observe that Theorem~\ref{t:main.top.version} implies the well-known fact that $G(S)\cong \pi_1(X)\cong G/\langle C_{ba}\mid (a,b)\in \ov A\times \ov B\rangle^G$.

Note that if $S$ is a finite rectangular band (meaning $G$ is trivial and $A,B$ are finite), then Theorem~\ref{t:main.top.version} shows that $M=S^1$ has a classifying space that is homotopy equivalent to a wedge of $(|A|-1)(|B|-1)$ $2$-spheres.  In particular, if $S$ is a $2\times 2$ rectangular band, then $M$ has a classifying space homotopy equivalent to a $2$-sphere, as was first noted in~\cite{Fiedor}.  If $\langle X\mid R\rangle$ is  group presentation and $S=\mathcal M(F_X,2,R\cup \{\ast\},C)$ with $C_{r1}=1$, $C_{r2}=r$ for $r\in R$ and $C_{\ast 1}=1=C_{\ast 2}$, then $M$ has a classifying space homotopy equivalent to the presentation $2$-complex associated to $\langle X\mid R\rangle$.

Let us now turn to the proof of Theorem~A.

\begin{proof}[Proof of Theorem~A]
I'll keep the notation of Theorem~\ref{t:main.top.version}.  Let $X'$ be the adjunction space $(\coprod_{(a,b)\in \ov A\times \ov B} B^2)\coprod_f Y$ and consider the long exact sequence in cellular homology associated to the pair $(X',Y)$.  Of course, $H_0(M)\cong H_0(X')\cong\mathbb Z$.  Note that \[C_n(X',Y) \cong \begin{cases} \bigoplus_{(a,b)\in \ov A\times \ov B}\mathbb Z, & \text{if}\ n=2\\ 0, & \text{else,}\end{cases}\] since $Y$ contains everything except for the $2$-cells attached via $f$.  Moreover, the image under the boundary map of the $2$-cell $(a,b)\times B^2$ is the $1$-cycle of $Y$ given by the signed sum of the oriented edges  of $\gamma_{ba}$.  Thus the connecting map $\partial\colon H_2(X',Y)=C_2(X',Y)\to H_1(Y)\cong \ab G$ sends the $2$-cell $(a,b)\times B^2$ to $C_{ba}[G,G]$ under the Hurewicz isomorphism.  Therefore, the long exact sequence in relative homology yields $H_n(M)\cong H_n(X')\cong H_n(Y)\cong H_n(G)$ for $n\geq 3$ and an exact sequence \[0\to H_2(G)\to H_2(M)\to  \mathbb Z[\ov A\times \ov B]\xrightarrow{\psi} \ab G\to H_1(M)\to 0.\]  This splits into two exact sequences
$0\to H_2(G)\to H_2(M)\to \ker \psi\to 0$ and $0\to \image \psi\to \ab G\to H_1(M)\to 0$.  Thus $H_1(M) \cong \coker \psi$.  Since $\ker\psi$ must be free abelian (being a subgroup of $\mathbb Z[\ov A\times \ov B]$), the first sequence splits yielding $H_2(M)\cong H_2(G)\oplus \ker \psi$. % The final statement of the theorem is immediate.
\end{proof}

Theorem~A has an analogue for cohomology (which is a slightly more detailed version of the results of~\cite{Nicocohom1}) with a similar proof that  I omit.

\begin{Thm}
Let $S=\mathcal M(G,A,B,C)$ be a completely simple semigroup over a group $G$.  Assume that $C$ is normalized with respect to $a_0\in A$ and $b_0\in B$ and put $\ov A=A\setminus\{a_0\}$, $\ov B=B\setminus \{b_0\}$.  Let $\Psi\colon \Hom_{\mathbb Z}(\ab G,\mathbb Z)\to \mathbb Z^{\ov A\times \ov B}$ be the homomorphism with $\Psi(f)(a,b) =f(C_{ba}[G,G])$.  Then
\[H^n(S) = \begin{cases}\mathbb Z, & \text{if}\ n=0\\ \ker \Psi=\Hom_{\mathbb Z}(\ab{G(M)},\mathbb Z), & \text{if}\ n=1\\  H_n(G), & \text{if}\ n\geq 3\end{cases}\] and there is an exact sequence $0\to \coker \Psi\to H^2(M)\to H^2(G)\to 0$.   In particular, if $\left\langle C_{ba}[G,G]\mid (a,b)\in \ov A\times \ov B\right\rangle$ is finite, then $\Psi$ is the zero map, and hence $H^1(M)\cong \Hom_{\mathbb Z}(\ab G,\mathbb Z)$, and there is an exact sequence $0\to \mathbb Z^{\ov A\times \ov B}\to H^2(M)\to H^2(G)\to 0$.
\end{Thm}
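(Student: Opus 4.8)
The plan is to mimic the proof of Theorem~A but with cohomology in place of homology, invoking Theorem~\ref{t:main.top.version} to replace $M=S^1$ by the homotopy-equivalent adjunction space $X'=(\coprod_{(a,b)\in \ov A\times \ov B} B^2)\coprod_f Y$, where $Y$ is a classifying space for $G$. Since cohomology is a homotopy invariant, $H^n(M)\cong H^n(X')$, and I would analyze $X'$ via the long exact sequence in \emph{cellular cohomology} for the CW pair $(X',Y)$. As observed in the proof of Theorem~A, the relative cochain complex $C^n(X',Y)$ is concentrated in degree $2$, where it is $\Hom_{\mathbb Z}(C_2(X',Y),\mathbb Z)\cong \mathbb Z^{\ov A\times \ov B}$ (an unrestricted product, or direct product, since we are dualizing a free abelian group on the index set $\ov A\times \ov B$).

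The key steps are as follows. First, $H^0(M)\cong H^0(X')\cong\mathbb Z$ since $X'$ is connected. Next, from the long exact sequence and the vanishing of $H^n(X',Y)$ for $n\neq 2$, I get $H^n(M)\cong H^n(X')\cong H^n(Y)\cong H^n(G)$ for $n\geq 3$ (and the statement should read $H^n(G)$ there, as written), together with the five-term exact sequence
\begin{equation*}
0\to H^1(X')\to H^1(Y)\xrightarrow{\delta} H^2(X',Y)\to H^2(X')\to H^2(Y)\to 0.
\end{equation*}
The crucial point is to identify the connecting map $\delta\colon H^1(Y)\to H^2(X',Y)$. Dually to the homology computation, $H^1(Y)\cong \Hom_{\mathbb Z}(H_1(Y),\mathbb Z)\cong \Hom_{\mathbb Z}(\ab G,\mathbb Z)$ (using that $H_1(Y)\cong \ab G$ is the only contributing term, $H_0$ being free), and $\delta$ is the $\mathbb Z$-dual of the homology boundary map $\partial\colon C_2(X',Y)\to H_1(Y)$ computed in the proof of Theorem~A, which sends the generator $(a,b)\times B^2$ to $C_{ba}[G,G]$. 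Hence $\delta=\Psi$, i.e., $\Psi(f)(a,b)=f(C_{ba}[G,G])$. This gives $H^1(M)\cong \ker\Psi$; and since $\ker\Psi$ consists exactly of those homomorphisms $\ab G\to\mathbb Z$ that kill every $C_{ba}[G,G]$, it is naturally $\Hom_{\mathbb Z}(\ab G/\langle C_{ba}[G,G]\rangle,\mathbb Z)\cong\Hom_{\mathbb Z}(\ab{G(M)},\mathbb Z)$, using $G(M)\cong G/\langle C_{ba}\mid (a,b)\in\ov A\times\ov B\rangle^G$ from the remark after Theorem~\ref{t:main.top.version}. Finally the tail $0\to \coker\Psi\to H^2(M)\to H^2(Y)\to 0$ is exactly the stated exact sequence, since $H^2(Y)\cong H^2(G)$. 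The "in particular" clause is immediate: if $\langle C_{ba}[G,G]\rangle$ is finite then every homomorphism to the torsion-free group $\mathbb Z$ kills it, so $\Psi=0$, giving $H^1(M)\cong\Hom_{\mathbb Z}(\ab G,\mathbb Z)$ and the short exact sequence $0\to\mathbb Z^{\ov A\times\ov B}\to H^2(M)\to H^2(G)\to 0$.

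The main obstacle is the identification $\delta=\Psi$: unlike the homology side, one must be careful that the relative cochain group $C^2(X',Y)=\Hom_{\mathbb Z}(C_2(X',Y),\mathbb Z)$ is the full direct product $\mathbb Z^{\ov A\times\ov B}$ rather than the direct sum, and that the connecting homomorphism really is the naive transpose of $\partial$ — this follows because the cellular cochain complex is obtained by applying $\Hom_{\mathbb Z}(-,\mathbb Z)$ to the cellular chain complex and $C_2(X',Y)$ is free, so there are no $\Ext$ corrections in the relevant degrees, but it is worth spelling out. A secondary point, purely cosmetic, is that the "$H_n(G)$" appearing for $n\geq 3$ in the statement should be "$H^n(G)$"; the proof produces the cohomology groups. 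Apart from these bookkeeping matters the argument is a routine dualization of the proof of Theorem~A, which is why the author omits it; I would include a one-paragraph version along the lines above for completeness.
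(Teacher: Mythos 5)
Your argument is correct and is precisely the dualization of the proof of Theorem~A that the paper says it omits: the same pair $(X',Y)$ from Theorem~\ref{t:main.top.version}, the long exact sequence in relative cellular cohomology, and the identification of the connecting map with $\Psi$ via the universal coefficient theorem (valid here since $C_2(X',Y)$ and $H_0(Y)$ are free, so no $\Ext$ terms interfere). Your observation that the ``$H_n(G)$'' in the statement for $n\geq 3$ should read $H^n(G)$ is also correct.
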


In particular, if $G$ is trivial, then $H^2(M)\cong \mathbb Z^{\ov A\times \ov B}$, as was observed in~\cite{Nicocohom1}.  I suspect that it is not generally the case that $H^2(M)\cong \coker \Psi\oplus H^2(G)$, but I don't have an example.

\subsection*{An algebraic proof of Theorem~A}
I'll now sketch an algebraic proof of most of Theorem~A.  Chasing down all the isomorphisms and connecting maps should allow one to produce exactly Theorem~A, but it hardly seems worth the effort.  If $V$ is a left $\mathbb ZM$-module, then $H_n(M,V)=\Tor^{\mathbb ZM}_n(\mathbb Z,V)$.  Let $S=\mathcal M(G,A,B,C)$ be a Rees matrix semigroup over the group $G$ and assume that $C$ is normalized with respect to $a_0\in A$ and $b_0\in B$.  Put $\ov A=A\setminus \{a_0\}$, $\ov B=B\setminus \{b_0\}$ and set $M=S^1$.

Keeping the notation from the previous section, note that $L_{b_0}/G$ is in bijection with $A$.  Thus we can define an action of $M$ on $A$ so that $m(a,1,b_0)G = (ma,1,b_0)G$.  Then we have an exact sequence
\begin{equation}\label{eq:exact}
0\to K\to \mathbb ZA\xrightarrow{\epsilon} \mathbb Z\to 0
\end{equation}
 where $\epsilon(a)=1$ for all $a\in A$.  Note that $K$ has basis as an abelian group all elements of the form $a-a_0$ with $a\neq a_0$.   Moreover, $s(a-a_0) =0$ for all $s\in S$.  Thus $K\cong \bigoplus_{a\in \ov A}\mathbb ZM/\mathbb ZS$, whence $H_n(M,K)\cong \bigoplus_{a\in \ov A}H_n(M,\mathbb ZM/\mathbb ZS)$.   Let us now compute $H_n(M,\mathbb ZM/\mathbb ZS)$ using the projective resolution $0\to \mathbb ZS\to \mathbb ZM\to \mathbb ZM/\mathbb ZS\to 0$.   Then $H_\bullet(M,\mathbb ZM/\mathbb ZS)$ is the homology of the chain complex $\mathbb ZB\xrightarrow{\eta} \mathbb Z$ where $\eta(b)=1$ all $b\in B$ (as $\mathbb ZS_M\cong \mathbb ZB$ since $M\backslash S$ is in bijection with $B$).
  Therefore, we have
\[H_n(M,\mathbb ZM/\mathbb ZS)\cong \begin{cases} \bigoplus_{b\in \ov B}\mathbb Z, & \text{if}\ n=1\\ 0, & \text{else.}\end{cases}\]

 The last ingredient is to compute $H_n(M,\mathbb ZA)$.  I claim $H_n(M,\mathbb ZA)\cong H_n(G)$.  To see this, note that if $P_\bullet\twoheadrightarrow \mathbb Z$ is a projective resolution of $\mathbb Z$ over $\mathbb ZG$, then since $\mathbb ZL_{b_0}$ is a $\mathbb ZM$-$\mathbb ZG$-bimodule that is projective as a left $\mathbb ZM$-module and free, hence flat, as a right $\mathbb ZG$-module, it follows that $\mathbb ZL_{b_0}\otimes_{\mathbb ZG} P_\bullet$ is a projective resolution of $\mathbb ZL_{b_0}\otimes_{\mathbb ZG}\mathbb Z\cong \mathbb ZA$ over $\mathbb ZM$.  Thus
 \begin{align*}
 H_n(M,\mathbb ZA)&=\Tor^{\mathbb ZM}_n(\mathbb Z,\mathbb ZA)=H_n(\mathbb Z\otimes_{\mathbb ZM} (\mathbb ZL_{b_0}\otimes_{\mathbb ZG}P_\bullet))\\ &=H_n((\mathbb Z\otimes_{\mathbb ZM}\mathbb ZL_{b_0})\otimes_{\mathbb ZG} P_{\bullet})\cong H_n(\mathbb Z\otimes_{\mathbb ZG}P_{\bullet})=H_n(G)
 \end{align*}
  as $|M\backslash L_{b_0}|=1$.

   Applying the long exact sequence for homology (i.e., for $\Tor$), and using the above computations, yields a long exact sequence
\[\cdots \to H_{n+1}(M)\to \bigoplus_{a\in \ov A} H_n(M,\mathbb ZM/\mathbb ZS)\to H_n(G)\to H_n(M)\to \cdots\]  It follows that $H_n(M)\cong H_n(G)$ for $n\geq 3$ and there is an exact sequence \[0\to H_2(G)\to H_2(M)\to  \bigoplus_{(a,b)\in \ov A\times \ov B}\mathbb Z\xrightarrow{\partial} H_1(G)\to H_1(M)\to 0.\]  Note that $H_2(M)\cong H_2(G)\oplus \ker \partial$ as $\ker \partial$ is free abelian.  Also, $H_1(G)\cong \ab G$, and $H_1(M)\cong \coker \partial$.  Thus to recover Theorem~A, it suffices to check that $\partial$ can be identified with $\psi$. This involves chasing a number of isomorphisms and the construction of the connecting homomorphism in the long exact sequence for $\Tor$, and is left to the interested reader.

\section{Homological and topological finiteness properties}
Gray and Pride considered homological finiteness properties for completely simple semigroups in~\cite{GrayPride}.  In particular, they were able to resolve the situation entirely when $B$ is finite and to describe the homological finiteness property $\mathrm{FP}_1$ in general.  I'll now complete their work by describing precisely when a completely simple semigroup is of type $\mathrm{FP}_n$ using a mixture of algebraic and topological techniques.  I can describe the situation for the topological finiteness property $\mathrm{F}_n$ in a satisfactory manner when $B$ is finite, but I can only give a partial result when $B$ is infinite.

Recall that if $R$ is a ring, then a left $R$-module  is of type $\mathrm{FP}_n$ if it has a projective resolution that is finitely generated through degree $n$.  It is of type $\mathrm{FP}_{\infty}$ if it is of type $\mathrm{FP}_n$ for all $n\geq 0$ or, equivalently, if it has a projective resolution that is finitely generated in each degree.  In particular, $\mathrm{FP}_0$ and $\mathrm{FP}_1$ are the properties of being, respectively, finite generated and finite presented. Of course, a monoid $M$ is of type $\mathrm{FP}_n$ precisely when the trivial $\mathbb ZM$-module $\mathbb Z$ is of type $\mathrm{FP}_n$.
The following result is~\cite[Proposition~1.4]{Bieribook}.

\begin{Thm}\label{t:bieri}
Let $R$ be a ring. % Then an $R$-module $W$ is of type $\mathrm{FP}_n$ if and only if the natural map $\prod_{i\in I}\Tor_k^R(V_i,W)\to \Tor_k^R(\prod_{i\in I}V_i,W)$ is an isomorphism for $0\leq k<n$ and an epimorphism for $k=n$.  Moreover,
If $0\to U\to V\to W\to 0$ is an exact sequence of left $R$-modules, then:
\begin{enumerate}
  \item If $U$ is of type $\mathrm{FP}_{n-1}$ and $V$ is of type $\mathrm{FP}_n$, then $W$ is of type $\mathrm{FP}_n$.
  \item If $V$ is of type $\mathrm{FP}_{n-1}$ and $W$ is of type $\mathrm{FP}_n$, then $U$ is of type $\mathrm{FP}_{n-1}$.
  \item If $U$ and $W$ are of type $\mathrm{FP}_n$, then $V$ is of type $\mathrm{FP}_n$.
\end{enumerate}
\end{Thm}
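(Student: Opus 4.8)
The plan is to deduce all three parts from two standard homological tools — the \emph{horseshoe lemma} and a \emph{syzygy lemma} — by induction on $n$, under the convention that $\mathrm{FP}_m$ for $m<0$ is vacuously satisfied by every module. The syzygy lemma I would isolate first is: if $0\to K\to F\to M\to 0$ is exact with $F$ a finitely generated projective $R$-module, then for $n\ge 1$ the module $M$ is of type $\mathrm{FP}_n$ if and only if $K$ is of type $\mathrm{FP}_{n-1}$, and whether $K$ is of type $\mathrm{FP}_{n-1}$ does not depend on the chosen finitely generated free (or projective) presentation of $M$. The ``if'' direction is immediate: splice a finitely generated projective resolution of $K$ of length $n-1$ onto $0\to K\to F\to M\to 0$. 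The ``only if'' direction is where Schanuel's lemma enters, together with the elementary observations that $\mathrm{FP}_{n-1}$ is preserved by finite direct sums and inherited by direct summands. I would also record the trivial facts that a quotient of a finitely generated module is finitely generated and an extension of finitely generated modules is finitely generated.

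For the base case $n=0$: part~(1) says a quotient of a finitely generated module is finitely generated; part~(2) is vacuous; part~(3) says an extension of finitely generated modules is finitely generated. For the inductive step, assume the theorem at level $n-1$ and fix $0\to U\to V\to W\to 0$. Part~(3) is then handled directly without induction: given finitely generated projective partial resolutions of $U$ and of $W$ of length $n$, the horseshoe lemma produces one of $V$ of length $n$, so $V$ is of type $\mathrm{FP}_n$. For parts~(1) and~(2) (now $n\ge 1$, the case $n=0$ of~(2) being vacuous), the hypotheses force $V$ to be finitely generated, so choose a finitely generated free $F_0$ with $F_0\twoheadrightarrow V$ and compose to get $F_0\twoheadrightarrow W$. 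Writing $V_1=\ker(F_0\to V)$ and $W_1=\ker(F_0\to W)$, a short diagram chase gives an exact sequence
\[0\to V_1\to W_1\to U\to 0.\]
For part~(1): $V$ of type $\mathrm{FP}_n$ gives $V_1$ of type $\mathrm{FP}_{n-1}$ by the syzygy lemma, while $U$ is of type $\mathrm{FP}_{n-1}$ by hypothesis, so part~(3) at level $n-1$ makes $W_1$ of type $\mathrm{FP}_{n-1}$, and then the syzygy lemma applied to $0\to W_1\to F_0\to W\to 0$ gives $W$ of type $\mathrm{FP}_n$. For part~(2): $W$ of type $\mathrm{FP}_n$ gives $W_1$ of type $\mathrm{FP}_{n-1}$ and $V$ of type $\mathrm{FP}_{n-1}$ gives $V_1$ of type $\mathrm{FP}_{n-2}$, so part~(1) at level $n-1$ applied to $0\to V_1\to W_1\to U\to 0$ yields $U$ of type $\mathrm{FP}_{n-1}$.

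The main obstacle — and essentially the only nonformal point — is the ``only if'' half of the syzygy lemma, namely the presentation-independence of the kernel being of type $\mathrm{FP}_{n-1}$; this is exactly where one invokes (a generalized form of) Schanuel's lemma to compare two finitely generated free presentations of $M$ up to finitely generated projective summands. A secondary nuisance is the bookkeeping at $n=0$ and the $\mathrm{FP}_{-1}$ convention, together with the fact that a submodule of a finitely generated module need not be finitely generated — which is why in part~(2) one genuinely needs $W$ to be finitely \emph{presented} (available because $n\ge 1$) and not merely finitely generated. Everything else is routine splicing of resolutions, and the whole argument is the one in Bieri's book.
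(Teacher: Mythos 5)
The paper gives no proof of this result at all---it simply cites Bieri's book (Proposition~1.4 there)---so there is nothing internal to compare against; your argument is the standard one from that source and is correct. The only point that genuinely needs care is the joint induction hidden inside the ``only if'' half of your syzygy lemma (Schanuel's lemma plus the fact that a direct summand of an $\mathrm{FP}_{n-1}$ module is again $\mathrm{FP}_{n-1}$, which is itself proved by the same dimension shift), but you flag exactly this, and the reduction of parts~(1) and~(2) to part~(3) via the exact sequence $0\to V_1\to W_1\to U\to 0$ together with the base cases and the $\mathrm{FP}_{-1}$ convention is sound.
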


The following is a reformulation of~\cite[Proposition~4.3]{Browncohomology}.

\begin{Prop}\label{p:partial.res}
Let $M$ be an $R$-module and $n\geq 0$.  Then $M$ is of type $\mathrm{FP}_n$ if and only if $M$ is finitely generated and, for each exact sequence $0\to K\to P_k\to P_{k-1}\to \cdots \to P_0\to M\to 0$ with $0\leq k<n$ and $P_0,\ldots, P_k$ finitely generated projective modules, $K$ is finitely generated.
\end{Prop}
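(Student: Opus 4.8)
The plan is to prove the two implications separately: sufficiency of the condition by a direct inductive construction of a partial resolution, and necessity by reduction to the generalized Schanuel lemma.

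For sufficiency, suppose $M$ is finitely generated and the stated kernel condition holds; I would build, degree by degree, a projective resolution of $M$ that is finitely generated through degree $n$. Since $M$ is finitely generated, start from a surjection $P_0\twoheadrightarrow M$ with $P_0$ finitely generated free and let $K_0$ be its kernel; if $n=0$ there is nothing left to do. Given an exact sequence $0\to K_j\to P_j\to\cdots\to P_0\to M\to 0$ with $P_0,\ldots,P_j$ finitely generated projective and $j<n$, the hypothesis applied with $k=j$ to this very sequence says $K_j$ is finitely generated, so I can pick a finitely generated free $P_{j+1}$ mapping onto $K_j$ and splice it in to get $0\to K_{j+1}\to P_{j+1}\to\cdots\to P_0\to M\to 0$. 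Once I reach degree $n$, I attach an arbitrary projective resolution of $K_n$; the result is a projective resolution of $M$ finitely generated in degrees $0,\ldots,n$, i.e.\ $M$ is of type $\mathrm{FP}_n$.

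For necessity, assume $M$ is of type $\mathrm{FP}_n$, so in particular $M$ is finitely generated, and fix a projective resolution $Q_\bullet\twoheadrightarrow M$ with $Q_0,\ldots,Q_n$ finitely generated. Given $k<n$, put $K'=\image(Q_{k+1}\to Q_k)$; then $0\to K'\to Q_k\to\cdots\to Q_0\to M\to 0$ is exact, and $K'$ is finitely generated, being a quotient of $Q_{k+1}$. Now let $0\to K\to P_k\to\cdots\to P_0\to M\to 0$ be an arbitrary exact sequence with $P_0,\ldots,P_k$ finitely generated projective. I would invoke the generalized Schanuel lemma for these two length-$k$ partial projective resolutions of $M$: it yields an isomorphism $K\oplus Q'\cong K'\oplus P'$, where $P'$ and $Q'$ are finite direct sums of the modules $P_i$ and $Q_i$. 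The right-hand side is finitely generated, hence so is $K$ as a direct summand of the left-hand side, which is what the condition requires.

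The main obstacle is the generalized Schanuel lemma itself: if $0\to K\to P_k\to\cdots\to P_0\to M\to 0$ and $0\to K'\to Q_k\to\cdots\to Q_0\to M\to 0$ are exact with all $P_i,Q_i$ projective, then $K$ and $K'$ become isomorphic after adding to each a finite direct sum of the $P_i$'s and $Q_i$'s. The case $k=0$ is the ordinary Schanuel lemma. For the inductive step, writing $K_0=\ker(P_0\to M)$ and $K_0'=\ker(Q_0\to M)$, Schanuel gives $N:=K_0\oplus Q_0\cong K_0'\oplus P_0$; deleting the bottom term of each sequence and padding $P_1$ by a summand $Q_0$ (respectively $Q_1$ by a summand $P_0$), with the differential extended by the identity on the padding, produces two length-$(k-1)$ partial projective resolutions of $N$ to which the inductive hypothesis applies. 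I would either spell this out or, since the proposition is merely a reformulation of~\cite[Proposition~4.3]{Browncohomology}, cite it there. Minor care is needed only in the degenerate cases $n=0$ (where the kernel condition is vacuous and the claim is just ``$\mathrm{FP}_0$ $\Leftrightarrow$ finitely generated'') and $k=0$ (where the tail $P_{k-1}\to\cdots\to P_0$ of the resolution is empty, so the sequence reads $0\to K\to P_0\to M\to 0$).
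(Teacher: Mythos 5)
Your argument is correct: sufficiency by splicing finitely generated projectives onto the successive (finitely generated, by hypothesis) kernels, and necessity by comparing an arbitrary finite partial projective resolution with a fixed one via the generalized Schanuel lemma, whose inductive proof you sketch accurately. The paper offers no proof of its own but simply defers to \cite[Proposition~4.3]{Browncohomology}, and your write-up is essentially the standard argument given there, so there is nothing to add.
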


 Let us retain the notation of the previous section.   In particular, $S=\mathcal M(G,A,B,C)$ is a Rees matrix semigroup over the group $G$, $C$ is normalized with respect to $a_0\in A$ and $b_0\in B$ and $M=S^1$.  Let $\ov A=A\setminus \{a_0\}$, $\ov B=B\setminus \{b_0\}$ and  $e=(a_0,1,b_0)$.
The following was observed in~\cite{GrayPride}, but I'll argue differently.

\begin{Prop}\label{p:Afinite}
If $M$ is of type $\mathrm{FP}_1$, then $A$ is finite.
\end{Prop}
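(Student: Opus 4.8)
The plan is to run the argument through the short exact sequence \eqref{eq:exact} of $\mathbb{Z}M$-modules,
\[0 \to K \to \mathbb{Z}A \xrightarrow{\epsilon} \mathbb{Z} \to 0,\]
together with the identification $K \cong \bigoplus_{a \in \ov A}\mathbb{Z}M/\mathbb{Z}S$ recorded in the previous section. The first step is to observe that $\mathbb{Z}A$ is a finitely generated $\mathbb{Z}M$-module. Indeed, $M$ acts transitively on the $M$-set $A\cong L_{b_0}/G$: for any $a\in A$, the element $(a,1,b_0)\in S$ sends every point of $A$ to $a$, using that $C$ is normalized with respect to $a_0,b_0$. (Equivalently, $\mathbb{Z}A$ is a homomorphic image of the cyclic projective module $\mathbb{Z}L_{b_0}=\mathbb{Z}Me$.) Hence $\mathbb{Z}A$ is of type $\mathrm{FP}_0$.

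The second step: if $M$ is of type $\mathrm{FP}_1$, then $\mathbb{Z}$ is of type $\mathrm{FP}_1$ as a $\mathbb{Z}M$-module, so Theorem~\ref{t:bieri}(2) applied to \eqref{eq:exact} with $n=1$ — taking $V=\mathbb{Z}A$ of type $\mathrm{FP}_0$ and $W=\mathbb{Z}$ of type $\mathrm{FP}_1$ — shows that $K$ is of type $\mathrm{FP}_0$, i.e., finitely generated over $\mathbb{Z}M$. The third step is an elementary observation: $\mathbb{Z}M/\mathbb{Z}S$ is a nonzero $\mathbb{Z}M$-module (it is infinite cyclic as an abelian group, since $M\setminus S$ is the singleton $\{1\}$), and a finitely generated module over any ring cannot be an internal direct sum of infinitely many nonzero submodules, as a finite generating set lies in a finite sub-sum. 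Applying this to $K\cong\bigoplus_{a\in\ov A}\mathbb{Z}M/\mathbb{Z}S$ forces $\ov A$, and hence $A$, to be finite.

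I do not expect a genuine obstacle; the one point requiring care is confirming that $\mathbb{Z}A$ is finitely generated over $\mathbb{Z}M$, which is precisely what makes Theorem~\ref{t:bieri}(2) applicable to \eqref{eq:exact}.
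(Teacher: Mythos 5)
Your proof is correct and follows the same route as the paper: apply Theorem~\ref{t:bieri}(2) to the sequence \eqref{eq:exact}, using that $\mathbb ZA$ is cyclic (hence of type $\mathrm{FP}_0$) to conclude $K\cong\bigoplus_{a\in\ov A}\mathbb ZM/\mathbb ZS$ is finitely generated, which forces $\ov A$ to be finite. You simply spell out the transitivity of the $M$-action on $A$ and the ``no infinite direct sum'' step, which the paper leaves implicit.
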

\begin{proof}
 Theorem~\ref{t:bieri}(2) applied to the exact sequence \eqref{eq:exact} implies that $K\cong \bigoplus_{a\in \ov A}\mathbb ZM/\mathbb ZS$ is finitely generated, and hence $A$ is finite, as $\mathbb ZA$ is a cyclic $\mathrm ZM$-module.
\end{proof}

The following construction underlies my approach to homological finiteness properties.  Assume from now on that $A$ is finite and fix a subset $Y\subseteq G$.  Let us build a projective $M$-$1$-complex $\Gamma_S(Y)$ as follows.  Take $\Gamma_S(Y)_0=L_{b_0}$.  Attach, for each $a\in \ov A$,  a free $1$-cell $M\times B^1$ by adding edges $m\epsilon_a$ from $m(a_0,1,b_0)$ to $m(a,1,b_0)$, for $m\in M$.  Notice that $\epsilon_a$ is an edge from $(a_0,1,b_0)$ to $(a,1,b_0)$.  If $s=(a',g,b)$ with $a'\in A$, then $s\epsilon_a$ is an edge from $(a',g,b_0)$ to $(a',gC_{ba},b_0)$.  Also attach a projective $1$-cell $L_{b_0}\times B^1$, for each $y\in Y$, by adding an edge $s\eta_y$ from $s(a_0,1,b_0)$ to $s(a_0,y,b_0)$.  So if $s=(a',g,b_0)$, then $s\eta_y$ is an edge from $(a',g,b_0)$ to $(a',gy,b_0)$.

Thus  $\Gamma_S(Y)$ consists of $|A|$ disjoint copies of the Cayley graph of $G$ with respect to the set $Y\cup \{C_{ba}\mid (a,b)\in \ov A\times B\}$ (with vertex sets $a\times G\times b_0$ with $a\in A$), attached together by the edges $\epsilon_a$ from $(a_0,1,1)$ to $(a,1,1)$ for $a\in \ov A$.    Here the vertex $g$ of the Cayley graph corresponds to $(a,g,b_0)$ in the $a$-copy, the edge $g$ to $gy$ labelled by $y\in Y$ corresponds to $(a,g,b_0)\eta_y$ in the $a$-copy, and the edge $g$ to $gC_{ba'}$ corresponds to $(a,g,b)\epsilon_{a'}$ in the $a$-copy. Notice that the action of $M$ on $\Gamma_S(Y)$ restricts to an action of $G$ on the $a_0$-copy of its Cayley graph that is just the usual left action of $G$ on its Cayley graph.  The following result is~\cite[Theorem~7]{GrayPride} with a different proof.

\begin{Prop}[Gray-Pride]\label{p:gp7}
Let $S=\mathcal M(G,A,B,C)$ be a completely simple semigroup, normalized as above.  Then $M=S^1$ is of type $\mathrm{FP}_1=\mathrm{F}_1$ if and only if $A$ is finite and there is a finite subset $Y\subseteq G$ such that $G=\left\langle Y\cup \{C_{ba}\mid (a,b)\in \ov A\times B\}\right\rangle$.
\end{Prop}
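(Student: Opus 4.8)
The plan is to establish the cycle of implications $\mathrm{F}_1\Rightarrow\mathrm{FP}_1\Rightarrow(\star)\Rightarrow\mathrm{F}_1$, where $(\star)$ abbreviates the stated condition that $A$ is finite and $G=\langle Y\cup\{C_{ba}\mid(a,b)\in\ov A\times B\}\rangle$ for some finite $Y\subseteq G$. The first implication is the general fact from Section~2 that $\mathrm{F}_n$ implies $\mathrm{FP}_n$, so the work lies in the other two; proving the cycle also yields the asserted equality $\mathrm{FP}_1=\mathrm{F}_1$ for $M$. The whole argument runs through the complex $\Gamma_S(Y)$ constructed above together with the following elementary translation, read straight off its description: \emph{$\Gamma_S(Y)$ is connected if and only if $Y\cup\{C_{ba}\mid(a,b)\in\ov A\times B\}$ generates $G$}. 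Indeed $\Gamma_S(Y)$ is $|A|$ copies of the Cayley graph of $G$ with respect to this set, joined by the star of edges $\epsilon_a$, which meet the $a_0$-copy only in the vertex $(a_0,1,b_0)$; so the complex is connected exactly when each Cayley-graph copy is, i.e.\ exactly when the set generates $G$.

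For $(\star)\Rightarrow\mathrm{F}_1$, assume $A$ and $Y$ are finite with $G=\langle Y\cup\{C_{ba}\}\rangle$. Then $\Gamma_S(Y)$ is a connected projective $M$-CW complex of dimension~$1$ that is finite mod $M$: its $M$-cells are the single projective $0$-cell $L_{b_0}$, the $|\ov A|$ free $1$-cells $\epsilon_a$, and the $|Y|$ projective $1$-cells $\eta_y$. Connectedness says that the truncated augmented cellular chain complex $C_1(\Gamma_S(Y))\to C_0(\Gamma_S(Y))\to\mathbb Z\to0$ is exact, and its first two terms are finitely generated projective $\mathbb ZM$-modules, so $M$ is already of type $\mathrm{FP}_1$. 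To upgrade this to $\mathrm{F}_1$ I would attach, for each loop $\ell$ of a generating set of $\pi_1(\Gamma_S(Y))$, a free $M$-cell $M\times B^2$ glued so that $\{1\}\times B^2$ fills $\ell$; this kills $\pi_1$ and leaves the $1$-skeleton untouched, so the resulting simply connected projective $M$-CW complex embeds by \cite[Lemma~6.4]{TopFinite1} into an equivariant classifying space $\til X$ with $\til X_1=\Gamma_S(Y)$. Then $M\backslash\til X$ is a classifying space for $M$ with finite $1$-skeleton $M\backslash\Gamma_S(Y)$.

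For $\mathrm{FP}_1\Rightarrow(\star)$, Proposition~\ref{p:Afinite} gives that $A$ is finite, so $\Gamma_S(Y)$ makes sense for any $Y\subseteq G$; I apply this with $Y=G$. The augmentation $\mathbb ZMe_{b_0}=\mathbb ZL_{b_0}=C_0(\Gamma_S(G))\to\mathbb Z$ is a surjection from a finitely generated projective module, so Theorem~\ref{t:bieri}(2) (or Proposition~\ref{p:partial.res}) forces its kernel $N$ to be a finitely generated $\mathbb ZM$-module. Since $G$ generates $G$, the complex $\Gamma_S(G)$ is connected, hence $N=\image\partial_1$ for the cellular boundary $\partial_1\colon C_1(\Gamma_S(G))\to C_0(\Gamma_S(G))$. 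Writing $C_1(\Gamma_S(G))$ as the direct sum of its cell modules — a copy of $\mathbb ZM$ for each free cell $\epsilon_a$ ($a\in\ov A$) and a copy of $\mathbb ZMe_{b_0}$ for each projective cell $\eta_y$ ($y\in G$) — we see that $N$ is the sum of the submodules $\partial_1(\text{summand})$; as $N$ is finitely generated and $\ov A$ is finite, finitely many summands already sum to $N$, and keeping all $\ov A$-summands together with a suitable finite set $Y_0\subseteq G$ of the others we obtain $\image\partial_1^{\Gamma_S(Y_0)}=N$. Thus $H_0(\Gamma_S(Y_0))\cong\mathbb Z$, so $\Gamma_S(Y_0)$ is connected, and the translation from the first paragraph gives $G=\langle Y_0\cup\{C_{ba}\mid(a,b)\in\ov A\times B\}\rangle$, which is $(\star)$.

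I expect the main obstacle to be the ``attach $2$-cells to kill $\pi_1$ and then apply \cite[Lemma~6.4]{TopFinite1}'' step in $(\star)\Rightarrow\mathrm{F}_1$: one must verify that $\pi_1(\Gamma_S(Y))$ can indeed be killed $M$-equivariantly without altering the $1$-skeleton — crucially using that $\mathrm{F}_1$ imposes no finiteness in dimension $\ge2$, since $\pi_1(\Gamma_S(Y))$ is infinitely generated whenever $G$ is infinite — and that the resulting complex is a bona fide equivariant classifying space. The remaining points are routine: the identification $\mathbb ZL_{b_0}\cong\mathbb ZMe_{b_0}$ of $\mathbb ZM$-modules, the fact that $\mathrm{FP}_1$ and $\mathrm{F}_1$ constrain only degrees $\le1$, and the back-and-forth between connectedness of $\Gamma_S(Y)$ and generation of $G$.
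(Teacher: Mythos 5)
Your proposal is correct and follows essentially the same route as the paper: both directions run through the graph $\Gamma_S(Y)$, the translation between its connectedness and the generation of $G$ by $Y\cup\{C_{ba}\mid (a,b)\in \ov A\times B\}$, and the finite generation of the augmentation kernel obtained by applying Theorem~\ref{t:bieri}(2) to the connected complex $\Gamma_S(G)$ and then extracting a finite subfamily of the cyclic summands of $C_1$. The only divergence is that you re-derive the $\mathrm F_1$ conclusion by hand (attaching free $M$-cells $M\times B^2$ along generators of $\pi_1(\Gamma_S(Y))$ and invoking \cite[Lemma~6.4]{TopFinite1}), where the paper simply cites \cite[Proposition~6.10]{TopFinite1}; your construction is a correct proof of the special case of that proposition that is being used.
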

\begin{proof}
Assume that $A$ is finite, as this is necessary for $\mathrm{FP}_1$ by Proposition~\ref{p:Afinite}.
First observe that if $Y$  is subset of $G$, then $\Gamma_S(Y)$ is connected if and only if $G=\left\langle Y\cup \{C_{ba}\mid (a,b)\in \ov A\times B\}\right\rangle$ since $\Gamma_S(Y)$ is connected if and only the Cayley graph of $G$ with respect to $Y\cup \{C_{ba}\mid (a,b)\in \ov A\times B\}$  is connected by the above discussion.   Suppose first that there is a finite subset $Y\subseteq G$ such that $G=\left\langle Y\cup \{C_{ba}\mid (a,b)\in \ov A\times B\}\right\rangle$.  Then, since $\Gamma_S(Y)$ is connected, its augmented cellular chain complex gives a finite projective presentation $\bigoplus_{y\in Y}\mathbb ZL_{b_0}\oplus \bigoplus_{a\in \ov A}\mathbb ZM\to \mathbb ZL_{b_0}\to\mathbb Z$, and so $\mathbb Z$ is of type $\mathrm{FP}_1$. (Alternatively, $M$ is of type $\mathrm{F}_1$ by~\cite[Proposition~6.10]{TopFinite1}.)

For the converse, consider the connected projective $M$-$1$-complex $\Gamma_S(G)$.  Note that $C_0(\Gamma_S(G))= \mathbb ZL_{b_0}$ and let $d\colon C_1(\Gamma_S(G))\to \mathbb ZL_{b_0}$ be the boundary map, with image $B_0$.  Then $0\to B_0\to \mathbb ZL_{b_0}\to \mathbb Z\to 0$ is exact, $\mathbb Z$ is of type $\mathrm{FP}_1$ and  $\mathbb ZL_{b_0}$ is cyclic, hence $B_0$ is finitely generated by Theorem~\ref{t:bieri}(2).  As a $\mathbb ZM$-module, $C_1(\Gamma_S(G))$ is generated by the edges $\epsilon_a$ with $a\in \ov A$ and the edges $e\eta_g$ with $g\in G$.  Since any generating set of a finitely generated module contains a finite generating subset, there is some finite subset $Y\subseteq G$ such that $B_0$ is generated as a $\mathbb ZM$-module by the images under $d$ of the edges $\epsilon_a$ with $a\in \ov A$ and $e\eta_y$ with $y\in Y$.  But this means precisely that the subcomplex $\Gamma_S(Y)$ of $\Gamma_S(G)$ is connected and hence $G=\left\langle Y\cup \{C_{ba}\mid (a,b)\in \ov A\times B\}\right\rangle$.  This completes the proof.
\end{proof}

To complete the discussion of the homological finiteness properties of completely simple semigroups, I need to introduce a few more tools.  Suppose that $H$ is a group and $\psi\colon F\to H$ is a surjective homomorphism with $F$ a free group.  Put $R=\ker \psi$.  Then $R$ is a free subgroup, and so $\ab R$ is a free abelian group.  The conjugation action of $F$ on $R$ induces a $\mathbb ZF$-module structure on $\ab R$, which is in fact a $\mathbb ZH$-module structure since $R$ acts trivially on $\ab R$.  The $\mathbb ZH$-module $\ab R$ is called the \emph{relation module} associated to $\psi$.  Suppose that $X$ is a free basis for $F$.  Then it is well known (and seems to go back to Mac Lane as far as I can tell) that $\ab R\cong H_1(\Gamma(H,X))$ where $\Gamma(H,X)$ is the Cayley graph of $H$ with respect to $X$.  Roughly speaking, $\Gamma(H,X)$ is the covering space of the wedge of $|X|$ circles corresponding to the subgroup $R$ and hence $H_1(\Gamma(H,X))\cong \ab{\pi_1(\Gamma(H,X))}\cong \ab R$ as abelian groups by the Hurewicz theorem.  It is easy to check this is a module isomorphism.  See~\cite[Page~43]{Browncohomology} for details.  An immediate consequence is the following.

\begin{Prop}\label{p:relation.module}
Let $\psi\colon F\to H$ be a surjective homomorphism with $F$ a finitely generated free group and $R=\ker \psi$.  Then, $H$ is of type $\mathrm{FP}_n$ with $n\geq 2$ if and only if the relation module $\ab R$ is of type $\mathrm{FP}_{n-2}$.
\end{Prop}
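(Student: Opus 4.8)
The plan is to read off from the Cayley graph $\Gamma(H,X)$ a four-term exact sequence of $\mathbb ZH$-modules exhibiting $\ab R$ as a second syzygy of the trivial module $\mathbb Z$, and then to run the standard dimension-shifting argument using Theorem~\ref{t:bieri}.

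First I would write the cellular chain complex of $\Gamma(H,X)$ as a complex of $\mathbb ZH$-modules. Let $X$ be a finite free basis of $F$ and $\ov X=\psi(X)$ the resulting (finite) generating set of $H$. The complex is $C_1=\bigoplus_{x\in X}\mathbb ZH\to C_0=\mathbb ZH$, sending the edge $he_x$ (from $h$ to $h\ov x$) to $h(\ov x-1)$; both $C_1$ and $C_0$ are finitely generated free $\mathbb ZH$-modules, hence of type $\mathrm{FP}_{\infty}$. Since $\psi$ is surjective, $\Gamma(H,X)$ is connected, so $H_0(\Gamma(H,X))=\mathbb Z$, the image of $C_1\to C_0$ is the augmentation ideal $I=\ker(\epsilon\colon\mathbb ZH\to\mathbb Z)$, and the augmented homology of the complex, together with the identification $H_1(\Gamma(H,X))\cong\ab R$ of $\mathbb ZH$-modules recalled just above, gives the exact sequence
\begin{equation*}
0\to\ab R\to\textstyle\bigoplus_{x\in X}\mathbb ZH\to\mathbb ZH\xrightarrow{\epsilon}\mathbb Z\to 0.
\end{equation*}
Splitting this at $I$ produces the two short exact sequences
\begin{equation*}
0\to I\to\mathbb ZH\to\mathbb Z\to 0,\qquad 0\to\ab R\to\textstyle\bigoplus_{x\in X}\mathbb ZH\to I\to 0.
\end{equation*}

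Next I would apply Theorem~\ref{t:bieri} twice, using that $\mathbb ZH$ and $\bigoplus_{x\in X}\mathbb ZH$ are of type $\mathrm{FP}_{\infty}$ (finitely generated free). From the first sequence, part (1) (with $V=\mathbb ZH$) and part (2) (with $V=\mathbb ZH$) together give that $\mathbb Z$ is of type $\mathrm{FP}_n$ if and only if $I$ is of type $\mathrm{FP}_{n-1}$, valid for all $n\geq 1$. From the second sequence, parts (1) and (2) (now with $V=\bigoplus_{x\in X}\mathbb ZH$) give that $I$ is of type $\mathrm{FP}_{n-1}$ if and only if $\ab R$ is of type $\mathrm{FP}_{n-2}$, valid whenever $n-1\geq 1$, i.e.\ $n\geq 2$. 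Since $H$ is of type $\mathrm{FP}_n$ exactly when $\mathbb Z$ is of type $\mathrm{FP}_n$ over $\mathbb ZH$, chaining the two equivalences yields the statement for $n\geq 2$.

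There is essentially no serious obstacle here: once the four-term sequence is in place the rest is routine syzygy bookkeeping. The only points needing a moment's care are confirming that the cokernel of $\bigoplus_{x\in X}\mathbb ZH\to\mathbb ZH$ is precisely the augmentation ideal (equivalently, that $\Gamma(H,X)$ is connected, which holds because $\psi$ is onto) and invoking the preceding paragraph's module-level identification of $H_1(\Gamma(H,X))$ with the relation module $\ab R$. One could alternatively phrase the whole argument through Proposition~\ref{p:partial.res}, viewing the four-term sequence as a finite partial free resolution of $\mathbb Z$ of length two and reading off $\ab R$ as the relevant kernel.
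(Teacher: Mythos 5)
Your proof is correct and is essentially the paper's own argument: both extract the four-term exact sequence $0\to\ab R\to\bigoplus_{x\in X}\mathbb ZH\to\mathbb ZH\to\mathbb Z\to 0$ from the Cayley graph (the paper calls your augmentation ideal $I$ the image $B_0$ of the boundary map) and then dimension-shift twice with Theorem~\ref{t:bieri}. No differences worth noting.
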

\begin{proof}
Let $X$ be a free generating set of $F$.
From $\ab R\cong  H_1(\Gamma(H,X))$ (or~\cite[Proposition~5.4]{Browncohomology}), there is an exact sequence $0\to \ab R\to \bigoplus_{x\in X}\mathbb ZH\xrightarrow{d} \mathbb ZH\to \mathbb Z\to 0$ of $\mathbb ZG$-modules.  Letting $B_0=\image d$, we have exact sequences $0\to \ab R\to \bigoplus_{x\in X}\mathbb ZH\to B_0\to 0$ and $0\to B_0\to \mathbb ZH\to \mathbb Z\to 0$.  Since $X$ is finite, it follows from Theorem~\ref{t:bieri} that $\mathbb Z$ is of type $\mathrm{FP}_n$ if and only if $B_0$  is of type $\mathrm{FP}_{n-1}$, if and only if $\ab R$ is of type $\mathrm{FP}_{n-2}$ for $n\geq 2$.  % This completes the proof.
\end{proof}

The following technical proposition will be of use.

\begin{Prop}\label{p:tensor.f0}
Let $W$ be a $\mathbb ZG$-module.  If $\mathbb ZL_{b_0}\otimes_{\mathbb ZG} W$ is finitely generated, then $W$ is finitely generated.
\end{Prop}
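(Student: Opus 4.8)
The plan is to understand the functor $\mathbb{Z}L_{b_0}\otimes_{\mathbb{Z}G}(-)$ well enough to see that it cannot turn a non-finitely-generated module into a finitely generated one. The key structural fact is that $L_{b_0}$ is a free right $G$-set with basis $\{(a,1,b_0)\mid a\in A\}$, so that as a $\mathbb{Z}G$-module $\mathbb{Z}L_{b_0}$ is free with basis indexed by $A$; equivalently, $\mathbb{Z}L_{b_0}\cong \bigoplus_{a\in A}\mathbb{Z}G$ as a right $\mathbb{Z}G$-module, and hence $\mathbb{Z}L_{b_0}\otimes_{\mathbb{Z}G}W\cong \bigoplus_{a\in A}W$ as abelian groups. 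The left $M$-action permutes and shuffles these copies of $W$ according to the action of $M$ on the basis $\{(a,1,b_0)\mid a\in A\}$, i.e., according to the action of $M$ on $A$ described in the previous section (with each element of $S$ acting so as to collapse everything into the $a'$-indexed copy when it is of the form $(a',g,b)$, and $1$ acting as the identity).

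First I would make this identification precise: write $V=\mathbb{Z}L_{b_0}\otimes_{\mathbb{Z}G}W$ and note $V=\bigoplus_{a\in A}V_a$ where $V_a=(a,1,b_0)\otimes W$ is an abelian-group copy of $W$. Next I would locate a copy of $W$ inside $V$ on which the $M$-action is ``visible'' as a $\mathbb{Z}G$-module: the summand $V_{a_0}=e\otimes W$ (with $e=(a_0,1,b_0)$) satisfies $g\cdot(e\otimes w)=e\otimes gw$ for $g\in G\subseteq M$, so $V_{a_0}$ is a $\mathbb{Z}G$-submodule of the restriction of $V$ to $\mathbb{Z}G$, isomorphic to $W$. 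Now suppose $V$ is finitely generated as a $\mathbb{Z}M$-module, say by $v_1,\dots,v_k$. Apply the idempotent $e$: then $e v_1,\dots,e v_k\in eV$, and since $e$ kills every summand $V_a$ with $a\ne a_0$ (because $e(a,1,b_0)=(a_0,1,b_0)$ forces $e\cdot(a\otimes w)$ into $V_{a_0}$) one checks $eV=V_{a_0}\cong W$. Moreover $eV$ is generated as a $\mathbb{Z}eMe$-module by $ev_1,\dots,ev_k$, and since $eMe = G$ (the maximal subgroup at $e$ in the completely simple semigroup, which acts on $V_{a_0}$ exactly via the given $\mathbb{Z}G$-action), this exhibits $W\cong V_{a_0}=eV$ as a finitely generated $\mathbb{Z}G$-module.

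Concretely, the argument is the standard one: for a ring $R$ with idempotent $e$, left multiplication by $e$ sends a finite $R$-module generating set of an $R$-module $V$ to a finite $eRe$-module generating set of $eV$; here $R=\mathbb{Z}M$, $eRe=\mathbb{Z}eMe=\mathbb{Z}G$, and $eV\cong W$ via the identification above. The one computational point that needs care is verifying $eV = V_{a_0}$ and that the $eMe = G$ action on it agrees with the original $\mathbb{Z}G$-structure on $W$ — this is where one uses the explicit Rees matrix multiplication and the normalization $C_{b_0 a}=1$, $C_{b a_0}=1$, but it is a short direct check rather than a genuine obstacle. The main (mild) obstacle is simply bookkeeping: keeping the left $M$-action and the right $G$-action straight through the tensor product and confirming that applying $e$ on the left really does project onto the single summand $V_{a_0}$ while commuting correctly with the residual $G$-action.
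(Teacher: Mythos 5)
Your identification of $eV$ with $V_{a_0}\cong W$ as a $\mathbb ZG$-module is correct and matches the first step of the paper's proof (though $e$ does not ``kill'' the summands $V_a$ with $a\neq a_0$: since $e\cdot((a,1,b_0)\otimes w)=(a_0,C_{b_0a},b_0)\otimes w=(a_0,1,b_0)\otimes w$, it maps each $V_a$ isomorphically onto $V_{a_0}$; the conclusion $eV=V_{a_0}$ still holds). The gap is the last step: the ``standard fact'' you invoke is false. If $V=Rv_1+\cdots+Rv_k$, then $eV=eRv_1+\cdots+eRv_k$, whereas the $eRe$-submodule generated by $ev_1,\ldots,ev_k$ is $eRev_1+\cdots+eRev_k$, and $eRv\neq eRev$ in general (already for $R=M_2(k)$, $e=e_{11}$, $V=R$, $v_1=1$, the set $eRe\cdot ev_1$ is the line through $e_{11}$ while $eV=eR$ is the whole first row). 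What is true is that $eV$ is generated over $eRe$ by the elements $emv_i$ with $em$ running over $eM$; here $e\mathbb ZM\cong\bigoplus_{b\in B}\mathbb ZG$ as a left $\mathbb ZG$-module, so when $B$ is infinite --- exactly the case the paper needs --- the functor $e(-)$ does not even preserve finite generation of arbitrary $\mathbb ZM$-modules, and nothing forces $ev_1,\ldots,ev_k$ to generate $eV$.

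In fact your claimed generating set already fails for finite $B$. Take $G=\mathbb Z/3=\langle t\rangle$, $A=\{a_0,a_1\}$, $B=\{b_0,b_1\}$, $C_{b_1a_1}=t$ (all other entries $1$), and $W=\mathbb ZG$, so $V=\mathbb ZL_{b_0}\otimes_{\mathbb ZG}\mathbb ZG\cong\mathbb ZL_{b_0}$. Put $v_1=(a_0,1,b_0)-(a_0,t,b_0)+(a_1,1,b_0)$. Then $(a',g,b_1)v_1=(a',g,b_0)-(a',gt,b_0)+(a',gt,b_0)=(a',g,b_0)$ for all $a'\in A$ and $g\in G$, so $V=\mathbb ZMv_1$ is cyclic. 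But $ev_1=2(a_0,1,b_0)-(a_0,t,b_0)$, which corresponds to $2-t$ under $eV=\mathbb Z[eMe]\cong\mathbb Z[\mathbb Z/3]$, and $\mathbb Z[eMe]\,ev_1=(2-t)\mathbb Z[\mathbb Z/3]$ is a proper submodule because $2-t$ is not a unit (its image under $t\mapsto\omega$ has norm $7$ in $\mathbb Z[\omega]$). The paper's proof avoids this trap by exploiting the induced-module structure of $V$ rather than its generators: it starts from an arbitrary surjection $\bigoplus_J\mathbb ZG\to W$, induces up to $\bigoplus_J\mathbb ZL_{b_0}\to V$, uses the finite generation of $V$ only to shrink $J$ to a finite $J_0$ with $\bigoplus_{J_0}\mathbb ZL_{b_0}\to V$ still onto, and then applies the exact functor $e(-)$; the point is that $e\mathbb ZL_{b_0}=\mathbb Z[eMe]\cong\mathbb ZG$ is cyclic, so this produces a surjection $\bigoplus_{J_0}\mathbb ZG\to W$. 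You would need to replace your final step with an argument of that kind.
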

\begin{proof}
Let $V=\mathbb ZL_{b_0}\otimes_{\mathbb ZG} W$.  First note that $eV\cong e\mathbb ZM\otimes_{\mathbb ZM}(\mathbb ZMe\otimes_{\mathbb ZG} W)\cong (e\mathbb ZM\otimes_{\mathbb ZM}\mathbb ZMe)\otimes_{\mathbb ZG} W\cong \mathbb Z[eMe]\otimes_{\mathbb ZG} W\cong W$ by identifying $Me$ with $L_{b_0}$ and $eMe$ with $G$.   Let $\bigoplus_{J}\mathbb ZG\to W\to 0$ be exact.  Then, tensoring on the left by $\mathbb ZL_{b_0}$, yields $\bigoplus_{J}\mathbb ZL_{b_0}\to V\to 0$ is exact.  Since $V$ is finitely generated, there is a finite subset $J_0\subseteq J$ with $\bigoplus_{J_0}\mathbb ZL_{b_0}\to V\to 0$ exact.  Then $\bigoplus_{J_0}\mathbb ZG\to W\to 0$ is exact as $eV\cong W$, $\mathbb ZG\cong \mathbb Z[eMe]\cong e\mathbb ZL_{b_0}$ and $e(-)\cong \Hom_{\mathbb ZM}(\mathbb ZM,(-))$ is exact.  Thus $W$ is finitely generated.
% Also note that $V=\mathbb ZMeV$ since $(a,g,b_0)\otimes w = (a,g,b_0)e(e\otimes w)$.  Suppose now that $X$ is a finite generating set for $W$.
%Then $\til X=\{e\otimes x\mid x\in X\}$ generates $V$ as $V=\mathbb ZMeV=\mathbb ZM(e\otimes W) = \mathbb ZM(\mathbb ZG\til X)=\mathbb ZM\til X$.  It follows that if $V$ is finitely generated, then there is a finite subset $Y\subseteq X$ such that $\til Y=\{e\otimes y\mid y\in Y\}$ generates $\mathbb ZM$. Therefore, $W\cong eV=e\mathbb ZMeV=e\mathbb ZMe\mathbb ZM\til Y = e\mathbb ZMe\mathbb ZMe\til Y=\mathbb ZG\til Y$, as $e\til Y=\til Y$, and hence $W$ is finitely generated.
\end{proof}

I now have the tools to show that homological finiteness properties are preserved and reflected by tensoring with $\mathbb ZL_{b_0}$.

\begin{Lemma}\label{l:crucial}
Let $W$ be a $\mathbb ZG$-module.  Then $W$ is of type $\mathrm{FP}_n$ if and only if $\mathbb ZL_{b_0}\otimes_{\mathbb ZG} W$ is of type $\mathrm{FP}_n$.
\end{Lemma}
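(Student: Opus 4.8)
The plan is to prove both implications by an induction on $n$ built on the functor $\mathbb{Z}L_{b_0}\otimes_{\mathbb{Z}G}(-)$, exploiting the facts already established about this functor: it is exact (since $\mathbb{Z}L_{b_0}$ is free, hence flat, as a right $\mathbb{Z}G$-module), it carries $\mathbb{Z}G$ to the finitely generated projective $\mathbb{Z}M$-module $\mathbb{Z}L_{b_0}$, and more generally it carries finitely generated free (resp.\ finitely generated projective) $\mathbb{Z}G$-modules to finitely generated projective $\mathbb{Z}M$-modules. Conversely, the idempotent-multiplication functor $e(-)\cong \Hom_{\mathbb{Z}M}(\mathbb{Z}Me,-)$ is exact, is left inverse to $\mathbb{Z}L_{b_0}\otimes_{\mathbb{Z}G}(-)$ on $\mathbb{Z}G$-modules (by the computation $e(\mathbb{Z}L_{b_0}\otimes_{\mathbb{Z}G}W)\cong W$ from the proof of Proposition~\ref{p:tensor.f0}), and it carries finitely generated $\mathbb{Z}M$-modules to finitely generated $\mathbb{Z}G$-modules. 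These two functors are what let finiteness data pass back and forth.

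The base case $n=0$ is exactly Proposition~\ref{p:tensor.f0} together with its converse: if $W$ is finitely generated then so is $\mathbb{Z}L_{b_0}\otimes_{\mathbb{Z}G}W$ (right-exactness applied to $\bigoplus_{J_0}\mathbb{Z}G\to W\to 0$), and Proposition~\ref{p:tensor.f0} gives the other direction. For the inductive step, I would take a short exact sequence $0\to W'\to F\to W\to 0$ with $F$ a finitely generated free $\mathbb{Z}G$-module (possible once $W$ is known to be finitely generated, which is handled by the $n=0$ case). Applying the exact functor $\mathbb{Z}L_{b_0}\otimes_{\mathbb{Z}G}(-)$ yields an exact sequence $0\to \mathbb{Z}L_{b_0}\otimes_{\mathbb{Z}G}W'\to \mathbb{Z}L_{b_0}\otimes_{\mathbb{Z}G}F\to \mathbb{Z}L_{b_0}\otimes_{\mathbb{Z}G}W\to 0$ with middle term finitely generated projective over $\mathbb{Z}M$. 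Now invoke Proposition~\ref{p:partial.res} (or directly Theorem~\ref{t:bieri}(2)): $W$ is of type $\mathrm{FP}_n$ iff $W'$ is of type $\mathrm{FP}_{n-1}$, and likewise $\mathbb{Z}L_{b_0}\otimes_{\mathbb{Z}G}W$ is of type $\mathrm{FP}_n$ iff $\mathbb{Z}L_{b_0}\otimes_{\mathbb{Z}G}W'$ is of type $\mathrm{FP}_{n-1}$. The inductive hypothesis applied to $W'$ identifies these two conditions, closing the loop.

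The main obstacle — really the only subtle point — is making the dévissage via Proposition~\ref{p:partial.res} go through cleanly in the $\mathbb{Z}M$-world: that proposition characterizes $\mathrm{FP}_n$ in terms of the finite generation of syzygies taken with respect to \emph{arbitrary} finitely generated projective resolutions, so one must be slightly careful that the resolution of $\mathbb{Z}L_{b_0}\otimes_{\mathbb{Z}G}W$ obtained by tensoring a chosen resolution of $W$ is of the admissible form (finitely generated projective terms) and that the induced syzygy really is $\mathbb{Z}L_{b_0}\otimes_{\mathbb{Z}G}(\text{syzygy of }W)$ — which it is, by flatness. Once that bookkeeping is in place, both directions of the equivalence fall out of the same short exact sequence, and no further computation is needed. (An alternative to the explicit induction is simply to tensor a finitely-generated-through-degree-$n$ projective resolution $P_\bullet\to W$ to get a finitely-generated-through-degree-$n$ projective resolution $\mathbb{Z}L_{b_0}\otimes_{\mathbb{Z}G}P_\bullet\to \mathbb{Z}L_{b_0}\otimes_{\mathbb{Z}G}W$ for the forward direction, and for the reverse direction apply $e(-)$ to a finitely-generated-through-degree-$n$ resolution of $\mathbb{Z}L_{b_0}\otimes_{\mathbb{Z}G}W$, using that $e(-)$ is exact and sends finitely generated projectives over $\mathbb{Z}M$ to finitely generated projectives over $\mathbb{Z}G$ and that $e(\mathbb{Z}L_{b_0}\otimes_{\mathbb{Z}G}P_i)\cong P_i$; I would likely present it this way to avoid the induction entirely.)
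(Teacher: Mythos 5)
Your primary argument---the induction on $n$ via a syzygy $0\to W'\to F\to W\to 0$, with the base case supplied by Proposition~\ref{p:tensor.f0} and the dimension shift by Theorem~\ref{t:bieri}---is correct and is in substance the same as the paper's proof: the paper simply packages the induction as a single appeal to Proposition~\ref{p:partial.res}, tensoring a partial finitely generated projective resolution of $W$ and using Proposition~\ref{p:tensor.f0} to pull finite generation of the syzygy back from $\mathbb ZM$ to $\mathbb ZG$. The two ingredients (exactness and preservation of finitely generated projectives under $\mathbb ZL_{b_0}\otimes_{\mathbb ZG}(-)$, plus reflection of finite generation) are identical.

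However, the parenthetical alternative you say you would prefer to present---applying $e(-)$ to a finitely generated resolution of $\mathbb ZL_{b_0}\otimes_{\mathbb ZG}W$ for the reverse direction---does not work, because $e(-)$ does \emph{not} carry finitely generated projective $\mathbb ZM$-modules to finitely generated $\mathbb ZG$-modules in general. Indeed $eM=eS=\{a_0\}\times G\times B$ is a free left $G$-set with $|B|$ orbits, so $e\mathbb ZM\cong\bigoplus_{b\in B}\mathbb ZG$; when $B$ is infinite this is projective but not finitely generated over $\mathbb ZG$, even though $\mathbb ZM$ is finitely generated projective over itself. (Exactness of $e(-)$ and the isomorphism $e(\mathbb ZL_{b_0}\otimes_{\mathbb ZG}P_i)\cong P_i$ are fine; it is only the finite-generation claim that fails.) Since the case of infinite $B$ is precisely the case this paper is written to settle, you should keep the inductive argument (or the partial-resolution argument via Propositions~\ref{p:partial.res} and~\ref{p:tensor.f0}) and drop the alternative, or restrict it explicitly to finite $B$.
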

\begin{proof}
Let $V=\mathbb ZL_{b_0}\otimes_{\mathbb ZG} W$.
First observe that since $\mathbb ZL_{b_0}=\mathbb ZMe$ is a finitely generated projective $\mathbb ZM$-module, if $P$ is a finitely generated projective $\mathbb ZG$-module, then $\mathbb ZL_{b_0}\otimes_{\mathbb ZG} P$ is a finitely generated projective $\mathbb ZM$-module.  Moreover, since $\mathbb ZL_{b_0}$ is free as a right $\mathbb ZG$-module it is flat.    In particular, if $W$ is of type $\mathrm{FP}_n$, then  there is a projective resolution $P_\bullet \twoheadrightarrow W$ with $P_i$ finitely generated for $0\leq i\leq n$.  Then $\mathbb ZL_{b_0}\otimes_{\mathbb ZG}P_\bullet\twoheadrightarrow  V$ is a projective resolution with $\mathbb ZL_{b_0}\otimes_{\mathbb ZG}P_i$ finitely generated for $0\leq i\leq n$.  Thus  $V$ is of type $\mathrm{FP}_n$.

Conversely, assume that $V$ is of type $\mathrm{FP}_n$.  I'll prove that $W$ is of type $\mathrm{FP}_n$ using Proposition~\ref{p:partial.res}.  First note that $W$ is finitely generated by Proposition~\ref{p:tensor.f0} since $V$ is of type $\mathrm{FP}_0$. Suppose that $0\to K\to P_k\to P_{k-1}\to \cdots\to P_0\to W\to 0$ is an exact sequence with $P_0,\ldots, P_k$ finitely generated projective modules and $0\leq k<n$.  Tensoring with the free right $\mathbb ZG$-module $\mathbb ZL_{b_0}$ yields an exact sequence $0\to \mathbb ZL_{b_0}\otimes_{\mathbb ZG}K\to \mathbb ZL_{b_0}\otimes_{\mathbb ZG}P_k\to\cdots\to \mathbb ZL_{b_0}\otimes_{\mathbb ZG}P_0\to V\to 0$ with $\mathbb ZL_{b_0}\otimes_{\mathbb ZG}P_i$ finitely generated projective for $0\leq i\leq k$ by the previous paragraph.  Since $V$ is of type $\mathrm{FP}_n$, it follows that $ \mathbb ZL_{b_0}\otimes_{\mathbb ZG}K$ is finitely generated by Proposition~\ref{p:partial.res}, and hence $K$ is finitely generated by Proposition~\ref{p:tensor.f0}.   Thus $W$ is of type $\mathrm{FP}_n$ by Proposition~\ref{p:partial.res}.
\end{proof}

The following theorem completes the investigation of homological finiteness conditions for completely simple semigroups begun in~\cite{GrayPride}.

\begin{Thm}\label{t:fpthm}
Let $S=\mathcal M(G,A,B,C)$ be a completely simple semigroup over a group $G$.  Assume that $C$ is normalized with respect to $a_0\in A$, $b_0\in B$, and put $\ov A=A\setminus\{a_0\}$.  Then $M=S^1$ is of type $\mathrm{FP}_n$ with $n\geq 1$, if and only  if $A$ is finite and there is a finite subset $Y\subseteq G$ such that if $F$ is the free group on $Y\bigcup \{C_{ba}\mid a\in \ov A\times B\}$, then the natural map $\gamma\colon F\to G$ is onto and, if $R=\ker \gamma$ and $n\geq 2$, then the relation module $\ab R$ is of type $\mathrm{FP}_{n-2}$ as a $\mathbb ZG$-module.  This latter condition is independent of the choice of $Y$.
\end{Thm}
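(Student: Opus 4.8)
The plan is to reduce the statement about $M = S^1$ to a statement about the maximal subgroup $G$ together with a finitely generated relation module, using the homological machinery already assembled. First I would dispose of the necessity of $A$ being finite: by Proposition~\ref{p:Afinite}, if $M$ is of type $\mathrm{FP}_1$ (which is implied by $\mathrm{FP}_n$ for any $n\geq 1$), then $A$ is finite, so we may assume this throughout. The case $n=1$ is exactly Proposition~\ref{p:gp7} (Gray--Pride), which says $\mathrm{FP}_1$ is equivalent to the existence of a finite $Y\subseteq G$ with $G = \langle Y\cup\{C_{ba}\mid (a,b)\in\ov A\times B\}\rangle$; note that this last condition is precisely surjectivity of the natural map $\gamma\colon F\to G$ from the free group on $Y\bigcup\{C_{ba}\mid a\in\ov A, b\in B\}$. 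So the substance is the case $n\geq 2$, where we must add the condition that the relation module $\ab R$ (with $R=\ker\gamma$) is of type $\mathrm{FP}_{n-2}$ as a $\mathbb ZG$-module.

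For $n\geq 2$, I would argue as follows. Since $\mathrm{FP}_n$ implies $\mathrm{FP}_1$, Proposition~\ref{p:gp7} gives that $A$ is finite and a finite $Y\subseteq G$ exists with $\gamma\colon F\to G$ onto. Now I want to relate $H_\bullet(M)$ to the relation module. The key point is that the projective $M$-$1$-complex $\Gamma_S(Y)$ is connected (Proposition~\ref{p:gp7}), so its augmented cellular chain complex gives an exact sequence
\begin{equation*}
0\to Z_1(\Gamma_S(Y))\to \textstyle\bigoplus_{y\in Y}\mathbb ZL_{b_0}\oplus\bigoplus_{a\in\ov A}\mathbb ZM\to \mathbb ZL_{b_0}\to\mathbb Z\to 0
\end{equation*}
of $\mathbb ZM$-modules, where the middle term is finitely generated projective (as $A$ and $Y$ are finite). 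Here the crucial computation is that the cycle module $Z_1(\Gamma_S(Y))$ is exactly $\mathbb ZL_{b_0}\otimes_{\mathbb ZG}\ab R$. This is the $M$-equivariant analogue of the classical fact (recalled just before Proposition~\ref{p:relation.module}) that for the Cayley graph $\Gamma(H,X)$ of a group one has $H_1(\Gamma(H,X))\cong\ab R$ for the corresponding relation module: $\Gamma_S(Y)$ is a disjoint union of $|A|$ copies of the Cayley graph of $G$ relative to $Y\cup\{C_{ba}\mid(a,b)\in\ov A\times B\}$ glued along a tree of edges $\epsilon_a$, so its first homology is $|A|$ copies of $\ab R$, which assembles $M$-equivariantly into $\mathbb ZL_{b_0}\otimes_{\mathbb ZG}\ab R$ (the tree contributes nothing to $H_1$). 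Once this identification is in hand, splitting the four-term exact sequence into $0\to Z_1(\Gamma_S(Y))\to P\to B_0\to 0$ and $0\to B_0\to\mathbb ZL_{b_0}\to\mathbb Z\to 0$ with $P$ finitely generated projective and applying Theorem~\ref{t:bieri} repeatedly shows that $\mathbb Z$ (i.e.\ $M$) is of type $\mathrm{FP}_n$ if and only if $B_0$ is of type $\mathrm{FP}_{n-1}$ if and only if $\mathbb ZL_{b_0}\otimes_{\mathbb ZG}\ab R$ is of type $\mathrm{FP}_{n-2}$. Then Lemma~\ref{l:crucial} converts this last condition into $\ab R$ being of type $\mathrm{FP}_{n-2}$ as a $\mathbb ZG$-module, which is the assertion.

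For independence of the choice of $Y$: given the equivalence just established, the condition ``$\ab R$ is of type $\mathrm{FP}_{n-2}$'' is equivalent to ``$M$ is of type $\mathrm{FP}_n$'', which makes no reference to $Y$; alternatively, one can cite Proposition~\ref{p:relation.module}, since changing $Y$ changes the finitely generated free group $F$ and the surjection $\gamma$, but the property ``$G$ is of type $\mathrm{FP}_n$'' (equivalently ``$\ab R$ is of type $\mathrm{FP}_{n-2}$'') is intrinsic to $G$ --- except that here $F$ surjects onto $G$ through the generating set $Y\cup\{C_{ba}\}$ rather than $Y$ alone, so one should note that enlarging the generating set of a finitely generated group does not change which relation modules are of type $\mathrm{FP}_{n-2}$, again by Proposition~\ref{p:relation.module}. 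The main obstacle I anticipate is the careful verification that $Z_1(\Gamma_S(Y))\cong\mathbb ZL_{b_0}\otimes_{\mathbb ZG}\ab R$ as $\mathbb ZM$-modules --- keeping track of how the $M$-action on $\Gamma_S(Y)$ (which is not free on the $L_{b_0}$-cells) interacts with the identification of each connected component's $H_1$ with $\ab R$, and confirming the gluing edges $\epsilon_a$ genuinely kill no cycles and create none. Everything else is a routine application of Theorem~\ref{t:bieri}, Lemma~\ref{l:crucial}, and Proposition~\ref{p:relation.module}.
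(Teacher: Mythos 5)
Your proposal is correct and follows essentially the same route as the paper: reduce to $n\geq 2$ via Propositions~\ref{p:Afinite} and~\ref{p:gp7}, split the augmented cellular chain complex of $\Gamma_S(Y)$ into two short exact sequences, dimension-shift with Theorem~\ref{t:bieri}, identify $H_1(\Gamma_S(Y))$ with $\mathbb ZL_{b_0}\otimes_{\mathbb ZG}\ab R$, and finish with Lemma~\ref{l:crucial}. The one step you flag as the anticipated obstacle --- the $\mathbb ZM$-module identification of $H_1(\Gamma_S(Y))$ with $\mathbb ZL_{b_0}\otimes_{\mathbb ZG}\ab R$ --- is exactly where the paper invests its effort, first passing to the subcomplex $Z$ with the $\epsilon_a$ removed via the relative homology sequence and then constructing the isomorphism explicitly through the hom-tensor adjunction, so your sketch is sound but that verification does need to be carried out.
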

\begin{proof}
Proposition~\ref{p:gp7} already handles the case $n=1$, so assume that $n\geq 2$, $A$ is finite and $Y\subseteq G$ is finite such that $\gamma$ is onto.  Consider the projective $M$-$1$-complex $\Gamma_S(Y)$ constructed above. Its augmented cellular chain complex $C_1(\Gamma_S(Y))\xrightarrow{d} C_0(\Gamma_S(Y))\to \mathbb Z$ is then a finite projective presentation of $\mathbb Z$.  Let $B_0=\image d$.  Then we have exact sequences $0\to H_1(\Gamma_S(Y))\to C_1(\Gamma_S(Y))\to B_0\to 0$ and $0\to B_0\to C_0(\Gamma_S(Y))\to \mathbb Z\to 0$.  Since $C_0(\Gamma_S(Y))$ and $C_1(\Gamma_S(Y))$ are finitely generated projectives, we have that $\mathbb Z$ is of type $\mathrm{FP}_n$ if and only if, $B_0$ is of type $\mathrm{FP}_{n-1}$, if and only if $H_1(\Gamma_S(Y))$ is of type $\mathrm{FP}_{n-2}$ for $n\geq 2$ by Theorem~\ref{t:bieri}.   I claim that $H_1(\Gamma_S(Y))\cong \mathbb ZL_{b_0}\otimes_{\mathbb ZG} \ab R$.  The result will then follow from Lemma~\ref{l:crucial}.

To see this,  note that $\Gamma_S(Y)$ has an $M$-invariant subcomplex $Z$  obtained by removing the edges $\epsilon_a$ with $a\in \ov A$.    Note that $Z$ is not a projective sub-$M$-CW complex, but it is isomorphic to a projective $M$-CW complex by modifying the construction of $\Gamma_S(Y)$ to glue in projective $M$-cells $S\times B^1$ of the form $s\epsilon_a$ instead of $M\times B^1$ for each $a\in \ov A$.  By our previous discussion, $Z$ is topologically $A\times \Gamma(G,X)$ where $X=Y\cup \{C_{ba}\mid (a,b)\in \ov A\times B\}$.  Notice that $\Gamma_S(Y)/Z$ is a star graph with vertex set $A$ and an edge $\epsilon_a$ from $a_0$ to $a$ for each $a\in \ov A$, hence contractible.  Using the natural isomorphism $H_n(\Gamma_S(Y),Z)\cong \til H_n(\Gamma_S(Y)/Z)$, we see from the long exact relative homology sequence (which is natural) that $H_1(\Gamma_S(Y))\cong H_1(Z)$ as $\mathbb ZM$-modules.  I claim that $H_1(Z)\cong \mathbb ZL_{b_0}\otimes_{\mathbb ZG}H_1(\Gamma(G,X))$.  Since it was already observed that $H_1(\Gamma(G,X))\cong \ab R$, this will complete the proof.

Let $Z_a$ be the $a$-copy of $\Gamma(G,X)$ in $Z$.  Then $Z=\coprod_{a\in A} Z_a$ is the decomposition into path components.  So $H_1(Z)=\bigoplus_{a\in A}H_1(Z_a)$ as an abelian group.
As mentioned earlier, the action of $M$ on $\Gamma_S(Y)$ (and hence $Z$) restricts to the usual action of $G$ on $a_0$-copy of $\Gamma(G,X)$ identified with $\Gamma(G,X)$.  There results an isomorphism of $\mathbb ZG$-modules $\p\colon H_1(\Gamma(G,X))\to H_1(Z_{a_0})\subseteq H_1(Z)$.  Note that $eH_1(Z)=H_1(Z_{a_0})$, and so by the hom-tensor adjunction (as $e(-)\cong \Hom_{\mathbb ZM}(\mathbb ZMe,(-))=\Hom_{\mathbb ZM}(\mathbb ZL_{b_0},(-)$) there is an induced homomorphism $\Phi\colon \mathbb ZL_{b_0}\otimes_{\mathbb ZG}H_1(\Gamma(G,X))\to H_1(Z)$ such that $\Phi(e\otimes c) = \p(c)$.  Note that since $L_{b_0}$ is a free right $G$-set with basis $\{(a,1,b_0)\mid a\in A\}$, it follows that $\mathbb ZL_{b_0}\otimes_{\mathbb ZG} H_1(\Gamma(G,X))\cong \bigoplus_{a\in A}(a,1,b_0)\otimes H_1(\Gamma(G,X))$ as abelian groups.   Note that $(a,1,b_0)(a_0,1,b_0)=(a,1,b_0)$ and $(a_0,1,b_0)(a,1,b_0)=(a_0,1,b_0)$ implies that left multiplication by $(a,1,b_0)$ takes $Z_{a_0}$ isomorphically to $Z_a$ with inverse left multiplication by $(a_0,1,b_0)$ and hence the same is true on homology.  Also left multiplication by $(a,1,b_0)$ takes $(a_0,1,b_0)\otimes H_1(\Gamma(G,X))$ isomorphically to $(a,1,b_0)\otimes H_1(\Gamma(G,X))$ with inverse left multiplication by $(a_0,1,b_0)$.  It now follows that since $\Phi((a_0,1,b_0)\otimes c)=\p(c)$ and $\p$ is an isomorphism that $\Phi$ is an isomorphism.  In fact, identifying  $\mathbb ZL_{b_0}\otimes_{\mathbb ZG}H_1(\Gamma(G,X))$ with $\bigoplus_{a\in A}H_1(\Gamma(G,X))$ as an abelian group, $\Phi$ is the map taking $a$-component to the isomorphic group $H_1(Z_a)$ via the isomorphism $c\mapsto (a,1,b_0)\p(c)$.
\end{proof}

As a corollary, I obtain~\cite[Theorem~4]{GrayPride}.

\begin{Cor}
Let $S=\mathcal M(G,A,B,C)$ be a completely simple semigroup over a group $G$.   Suppose that $B$ is finite. Then $M=S^1$ is of type $\mathrm{FP}_n$ with $n\geq 1$ if and only  if $A$ is finite and $G$ is of type $\mathrm{FP}_n$.
\end{Cor}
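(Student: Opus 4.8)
The plan is to read the corollary straight off Theorem~\ref{t:fpthm}, exploiting the observation that when $B$ is finite (and $A$ is finite), the set $\{C_{ba}\mid (a,b)\in \ov A\times B\}$ is finite, so the free group $F$ appearing in that theorem is finitely generated and Proposition~\ref{p:relation.module} becomes available to translate directly between ``$\ab R$ of type $\mathrm{FP}_{n-2}$'' and ``$G$ of type $\mathrm{FP}_n$''.

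First I would handle $n=1$ separately. By Proposition~\ref{p:gp7}, $M$ is of type $\mathrm{FP}_1$ if and only if $A$ is finite and $G=\langle Y\cup\{C_{ba}\mid (a,b)\in \ov A\times B\}\rangle$ for some finite $Y\subseteq G$. Since $B$ (and, in this situation, $A$) is finite, the set $\{C_{ba}\mid (a,b)\in \ov A\times B\}$ is finite, so this condition holds precisely when $A$ is finite and $G$ is finitely generated, i.e., when $A$ is finite and $G$ is of type $\mathrm{FP}_1$.

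Now assume $n\geq 2$. For the forward direction, if $M$ is of type $\mathrm{FP}_n$ then it is of type $\mathrm{FP}_1$, so $A$ is finite by Proposition~\ref{p:Afinite}; Theorem~\ref{t:fpthm} then furnishes a finite $Y\subseteq G$ so that the free group $F$ on $Y\cup\{C_{ba}\mid (a,b)\in \ov A\times B\}$ surjects onto $G$ via $\gamma$ and the relation module $\ab R$, with $R=\ker\gamma$, is of type $\mathrm{FP}_{n-2}$ over $\mathbb Z G$. Since $A$ and $B$ are finite, $\ov A\times B$ is finite, hence $F$ is finitely generated, so Proposition~\ref{p:relation.module} gives that $G$ is of type $\mathrm{FP}_n$. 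Conversely, if $A$ is finite and $G$ is of type $\mathrm{FP}_n$, then $G$ is finitely generated; choosing a finite generating set $Y$ of $G$, the free group $F$ on the finite set $Y\cup\{C_{ba}\mid (a,b)\in \ov A\times B\}$ is finitely generated and $\gamma\colon F\to G$ is onto, so Proposition~\ref{p:relation.module} forces $\ab R$ to be of type $\mathrm{FP}_{n-2}$. The hypotheses of the ``if'' direction of Theorem~\ref{t:fpthm} are then satisfied, so $M$ is of type $\mathrm{FP}_n$.

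There is essentially no obstacle here; the only point requiring care is that the free group $F$ of Theorem~\ref{t:fpthm} is finitely generated exactly when $\ov A\times B$ is finite, which is guaranteed by the finiteness of $A$ and $B$. This is precisely what makes Proposition~\ref{p:relation.module} applicable, and its failure is exactly why the case of infinite $B$ is genuinely more delicate.
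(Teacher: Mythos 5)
Your proof is correct and follows essentially the same route as the paper: both arguments reduce the corollary to Theorem~\ref{t:fpthm} by observing that finiteness of $A$ and $B$ makes $\{C_{ba}\mid (a,b)\in \ov A\times B\}$ finite, so that the free group $F$ is finitely generated and Proposition~\ref{p:relation.module} translates ``$\ab R$ of type $\mathrm{FP}_{n-2}$'' into ``$G$ of type $\mathrm{FP}_n$''. Your write-up is merely a more explicit unpacking of the two directions than the paper's.
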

\begin{proof}
Recall that a group is of type $\mathrm{FP}_1$ if and only if it is finitely generated~\cite{Browncohomology}. We may assume that $A$ is finite by
Proposition~\ref{p:Afinite}. Since $\{C_{ba}\mid (a,b)\in \ov A\times B\}$ (retaining the previous notation) is a finite set when $A$ and $B$ are finite, $G$ is of type $\mathrm{FP}_1$ if and only if there is a finite subset $Y\subseteq G$  such that if $F$ is the free group on $Y\bigcup \{C_{ba}\mid a\in \ov A\times B\}$, then the natural map $\gamma\colon F\to G$ is onto. Moreover,  if $R=\ker \gamma$ and $n\geq 2$, then the relation module $\ab R$ is of type $\mathrm{FP}_{n-2}$ if and only if $G$ is of type $\mathrm{FP}_n$ by Proposition~\ref{p:relation.module}.  The result now follows directly from Theorem~\ref{t:fpthm}.
\end{proof}

In the case that $B$ is infinite, the condition that $\ab R$ be of type $\mathrm{FP}_{n-2}$ doesn't seem all that easy to get one's hands on. Sometimes there are easier methods to see that $M$ is not of type $\mathrm{FP}_n$.
 If $M$ is of type $\mathrm{FP}_n$, then $H_k(M)$ is finitely generated for $k\leq n$.  This follows because if $P$ is a finitely generated projective $\mathbb ZM$-module, then $P_M$ is a finitely generated free abelian group.

\begin{Prop}
Retaining the notation of Theorem~A, if $M$ is of type $\mathrm{FP}_2$ and $B$ is infinite, then  $\langle C_{ba}[G,G]\mid (a,b)\in \ov A\times \ov B\rangle$  is not finitely generated, and hence $G$ is not finitely generated.
\end{Prop}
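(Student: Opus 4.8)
The plan is to combine the computation of $H_2(M)$ from Theorem~A with the elementary observation, recorded just above, that a monoid of type $\mathrm{FP}_2$ has $H_2(M)$ finitely generated. Write $\psi\colon\mathbb{Z}[\ov A\times\ov B]\to\ab G$ for the homomorphism of Theorem~A, so that its image is exactly the subgroup $N=\langle C_{ba}[G,G]\mid(a,b)\in\ov A\times\ov B\rangle$ of $\ab G$ under consideration. First I would observe that, since $B$ is infinite and $\ov A\neq\emptyset$ (which has to be assumed, for otherwise $N$ is trivial and the statement fails), the group $\mathbb{Z}[\ov A\times\ov B]$ is free abelian of infinite rank. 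The goal is then to derive a contradiction from the assumption that $N$ is finitely generated.

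The steps, in order, would be: (1) invoke $\mathrm{FP}_2\Rightarrow H_2(M)$ finitely generated; (2) apply Theorem~A, namely $H_2(M)\cong H_2(G)\oplus\ker\psi$, to conclude that $\ker\psi$, being a direct summand of a finitely generated abelian group, is finitely generated; (3) feed $\ker\psi$ and the assumed finitely generated group $N$ into the short exact sequence $0\to\ker\psi\to\mathbb{Z}[\ov A\times\ov B]\xrightarrow{\psi}N\to 0$ to conclude, by the case $n=0$ of Theorem~\ref{t:bieri}(3) over $\mathbb{Z}$, that $\mathbb{Z}[\ov A\times\ov B]$ is finitely generated, contradicting its infinite rank; and (4) deduce that $N$ is not finitely generated, whence, since $N$ is a subgroup of the abelian group $\ab G$ and subgroups of finitely generated abelian groups are finitely generated, $\ab G$ is not finitely generated, and therefore neither is $G$.

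I do not anticipate any serious obstacle here: once Theorem~A is available the argument is formal. The two points to be careful about are that one must work with $H_2$ and not $H_1$ (the finiteness of $H_1(M)\cong\coker\psi$ carries no information about whether $\image\psi$ is finitely generated), and that the hypothesis $\ov A\neq\emptyset$ is genuinely needed. One can also bypass the direct-sum decomposition: the five-term exact sequence $0\to H_2(G)\to H_2(M)\to\mathbb{Z}[\ov A\times\ov B]\xrightarrow{\psi}\ab G\to H_1(M)\to 0$ from the proof of Theorem~A already exhibits $\ker\psi$ as a quotient of the finitely generated group $H_2(M)$, which is all that step (2) requires.
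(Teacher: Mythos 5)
Your proof is correct and is essentially the paper's own argument: the paper likewise combines the observation that $\mathrm{FP}_2$ forces $H_2(M)$ to be finitely generated with the decomposition $H_2(M)\cong H_2(G)\oplus\ker\psi$ from Theorem~A, merely phrasing your step (3) as a rank count ($\image\psi$ finitely generated would force $\ker\psi$ to have infinite rank). Your remark that $\ov A\neq\emptyset$ must be tacitly assumed is a valid observation that the paper leaves implicit.
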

\begin{proof}
If $\langle C_{ba}[G,G]\mid (a,b)\in \ov A\times \ov B\rangle$ is finitely generated, then $\ker\psi$ from Theorem~A must have infinite rank since $B$ is infinite, and so $H_2(M)$ is not finitely generated.
\end{proof}

%The next theorem explores topological finiteness properties for completely simple semigroups.  The corresponding result for homological finiteness properties was proved in~\cite{GrayPride}.   First I'll give some examples to show that things behave somewhat strangely when $B$ is infinite.  If $M$ is of type $\mathrm{FP}_n$, then $H_k(M)$ is finitely generated for $k\leq n$.  This follows because if $P$ is a finitely generated projective $\mathbb ZM$-module, then $P_M$ is a finitely generated free abelian group.   In particular, if $B$ is infinite and $M$ is of type $\mathrm{FP}_2$, then  $\langle C_{ba}[G,G]\mid (a,b)\in \ov A\times \ov B\rangle$ cannot be finitely generated. Otherwise, retaining the notation of Theorem~A, $\ker\psi$ must has infinite rank, and so $H_2(M)$ is not finitely generated.  In~\cite[Theorem~7]{GrayPride} it is shown that $M$ is of type $\mathrm{FP}_1$ if and only if $A$ is finite and there is a finite subset $X$ of $G$ such that $G=\langle X\cup \{C_{ba}\mid (a,b)\in \ov A\}\rangle$.

Let $G=\bigoplus_{i\geq 1}\mathbb Zx_i$.  Put $A=\{a_0,a_1\}$ and $B=\{b_0,b_1,\ldots\}$.  Put $C_{ba_0}=1 = C_{b_0a}$, for $a\in A$ and $b\in B$, and $C_{b_ia_1}=x_i$ for $i\geq 1$.  Then $G$ is not finitely generated, and so not of type $\mathrm{FP_1}$.   But $M$ is of type $\mathrm{FP}_1$ by~\cite[Theorem~7]{GrayPride} or Proposition~\ref{p:gp7}.  However, $M$ is not of type $\mathrm{FP}_2$.  To see this, note that $\psi\colon \mathbb Z\ov B\to G$ given by $\psi(b_i) = C_{b_ia_1}$ is an isomorphism.  Thus $H_1(S) \cong 0$ and $H_n(S) \cong H_n(G)$ for $n\geq 2$ by Theorem~A.   But $\mathbb Z^d$ is retract of $G$ for each $d\geq 2$, and so $H_2(\mathbb Z^d)$ is a quotient of $H_2(G)$.  But $H_2(\mathbb Z^d)$ is free abelian of rank $\binom{d}{2}$.  Therefore, $H_2(M)\cong H_2(G)$ is not finitely generated, whence $M$ is not of type $\mathrm{FP}_2$. % Note that since $G$ is generated by the entries of $C$,  this example would seem to dash Gray and Pride's hope~\cite{GrayPride} that homological finiteness properties for $M$ can be deduced from some sort of relative homological finiteness properties of $G$ with respect to $\langle C_{ba}\mid (a,b)\in \ov A\times \ov B\rangle$.

On the other hand, if $A$ is finite, $B$ is infinite and if the $C_{ba}$ with $a\in \ov A$ and $b\in \ov B$ freely generate $G$ (as in~\cite[Example~2]{GrayPride}), then   $\gamma\colon F\to G$ has kernel the normal subgroup generated by the $C_{b_0a}$ with $a\in \ov A$, and it is not hard to check that $\ab R$ is freely generated by these elements as a $\mathbb ZG$-module, and is hence of type $\mathrm{FP}_{\infty}$.  Thus $M$ is of type $\mathrm{FP}_{\infty}$ in this case by Theorem~\ref{t:fpthm}.

The topological finiteness condition $\mathrm F_n$ is straightforward to characterize when $B$ is finite using Theorem~\ref{t:main.top.version}.

\begin{Thm}
Let  $S=\mathcal M(G,A,B,C)$ be a Rees matrix semigroup over the group $G$ and assume that $C$ is normalized with respect to $a_0\in A$ and $b_0\in B$ and that $B$ is finite.  Put $M=S^1$.  Then the following are equivalent for $n\geq 1$.
\begin{enumerate}
\item $M$ has the topological finiteness property $\mathrm F_n$.
\item $A$ is finite and $G$ has the topological finiteness property $\mathrm F_n$.
\end{enumerate}
\end{Thm}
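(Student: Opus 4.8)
The plan is to extract the statement from Theorem~\ref{t:main.top.version} and the corollary to Theorem~\ref{t:fpthm} (which already disposes of $\mathrm{FP}_n$ when $B$ is finite), combined with two standard inputs: for a group, and for a monoid of type $\mathrm F_2$, one has $\mathrm F_n$ if and only if one has $\mathrm{FP}_n$ (\cite[Theorem~6.15]{TopFinite1}); and a monoid has $\mathrm F_2$ if and only if it is finitely presented (\cite{TopFinite1}).

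For $(2)\Rightarrow(1)$ I would argue straight from Theorem~\ref{t:main.top.version}. Since $G$ has $\mathrm F_n$, choose a classifying space $Y$ for $G$ with finite $n$-skeleton and run the construction in the proof of Theorem~\ref{t:main.top.version} with this $Y$; the resulting classifying space $X = M\backslash\til X$ for $M$ has as its cells exactly the cells of $Y$ together with one extra $1$-cell $\ov\epsilon_a$ for each $a\in\ov A$ and one extra $2$-cell $d_{ba}$ for each $(a,b)\in\ov A\times B$. As $A$ and $B$ are finite, only finitely many cells are added and they all sit in dimension $\le 2$, so $X$ has finite $n$-skeleton and $M$ has $\mathrm F_n$.

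For $(1)\Rightarrow(2)$, assume $M$ has $\mathrm F_n$ with $n\ge 1$. Then $M$ has $\mathrm{FP}_n$, so the corollary to Theorem~\ref{t:fpthm} gives that $A$ is finite and $G$ has $\mathrm{FP}_n$. If $n=1$ we are done, because for a group and for $M=S^1$ the properties $\mathrm{FP}_1$ and $\mathrm F_1$ coincide (Proposition~\ref{p:gp7}). For $n\ge 2$ it suffices to show $G$ is finitely presented: then $G$ has $\mathrm F_2$, hence $G$ has $\mathrm F_n$ by \cite[Theorem~6.15]{TopFinite1} since it also has $\mathrm{FP}_n$. To get this, I would observe that $M$ has $\mathrm F_2$, hence $M=S^1$ is finitely presented as a monoid, and then invoke the classical description of finitely presented completely simple semigroups: since $A$ and $B$ are finite, a finite monoid presentation of $S^1$ can be rewritten into a finite group presentation of the maximal subgroup $G\cong eS^1e$, where $e=(a_0,1,b_0)$.

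The step I expect to be the real obstacle is this last one — $S^1$ finitely presented $\Rightarrow$ $G$ finitely presented — and it is where the work (or a citation to semigroup presentation theory) must go. It is not formal and cannot be read off the classifying-space machinery, because the homology and the fundamental group of any classifying space for $M$ only see the group completion $G(M)=G/\langle C_{ba}\mid(a,b)\in\ov A\times\ov B\rangle^G$; knowing that $G$ is of type $\mathrm{FP}_n$ and that the quotient $G(M)$ of $G$ by a finitely normally generated subgroup is finitely presented does not force $G$ itself to be finitely presented. Everything else in the argument is bookkeeping with Theorem~\ref{t:main.top.version}, the corollary to Theorem~\ref{t:fpthm}, Proposition~\ref{p:gp7}, and the listed $\mathrm F$-versus-$\mathrm{FP}$ equivalences.
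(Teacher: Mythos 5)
Your $(2)\Rightarrow(1)$ direction is exactly the paper's: feed a classifying space $Y$ for $G$ with finite $n$-skeleton into Theorem~\ref{t:main.top.version} and observe that only finitely many cells, all of dimension $\le 2$, are added. That part is fine.

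The $(1)\Rightarrow(2)$ direction, however, has a genuine gap, and it is not the one you flagged. Your chain needs the implication ``$M$ of type $\mathrm F_2$ $\Rightarrow$ $M$ finitely presented as a monoid.'' The only implication available from \cite{TopFinite1} (and the only one quoted in this paper) is the converse: finitely presented implies $\mathrm F_2$ \cite[Theorem~6.14]{TopFinite1}. For groups, $\mathrm F_2$ is equivalent to finite presentability, but for monoids with the one-sided property $\mathrm F_2$ this equivalence is not established, so you cannot launch into the semigroup-presentation argument (Ru\v{s}kuc-style: $S^1$ finitely presented with $A,B$ finite iff $G$ finitely presented) because you never get your hands on a finite presentation of $M$. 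You correctly observe that the classifying space of $M$ only sees $G(M)=G/\langle C_{ba}\rangle^G$ rather than $G$, so $\pi_1$ of the quotient space is of no help; but that is precisely why one must work \emph{equivariantly} rather than with presentations. The paper's proof does this: take an equivariant classifying space $\til X$ for $M$ with $M\backslash\til X_n$ finite, let $e=(a_0,1,b_0)$, and consider $e\til X\cong eM\otimes_M\til X$. The map $x\mapsto ex$ is a retraction of the contractible space $\til X$ onto $e\til X$, so $e\til X$ is contractible; and for each idempotent $f$ the set $eMf\subseteq eM=a_0\times G\times B$ is a free left $G$-set with at most $|B|$ orbits, so each projective $M$-cell of $\til X$ contributes at most $|B|$ free $G$-cells to $e\til X$. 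Since $B$ is finite, $e\til X$ is an equivariant classifying space for $G$ with $G\backslash e\til X_n$ finite, giving $\mathrm F_n$ for $G$ directly (this is also where the hypothesis that $B$ is finite actually enters the argument). If you want to salvage your route, you would have to either prove $\mathrm F_2\Rightarrow$ finite presentability for these particular monoids or replace that step with an argument of this equivariant type.
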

\begin{proof}
Suppose that $M$ has the topological finiteness property $\mathrm F_n$.  Then it has the homological finiteness property $\mathrm{FP}_1$, and so $A$ is finite by~\cite[Theorem~7]{GrayPride}.  Suppose that $\til X$ is an equivariant classifying space for $M$ with $M\backslash \til X_n$ finite.  Let $e=(a_0,1,b_0)$.  Then $eM=a_0\times G\times B$ is a free left $G$-set with $|B|$ orbits under the action $g'(a_0,g,b) = (a_0,g'g,b)$.  It follows that if $f$ is any idempotent, then $eMf\subseteq eM$ is $G$-invariant and hence a free $G$-set with fewer than $|B|$ orbits (actually, it's a transitive $G$-set if $f\neq 1$).  Thus $e\til X\cong eM\otimes_M \til X$ is a free $G$-CW complex and  $G\backslash e\til X_n$ is a finite CW complex by~\cite[Corollary~3.2]{TopFinite1}.  But $e\til X$ is a retract of $\til X$ via $x\mapsto ex$, and a retract of a contractible space is contractible.  Thus $G\backslash \til X$ is a classifying space for $G$, and so $G$ has the topological finiteness property $\mathrm F_n$.

Suppose conversely, that $A$ is finite and $G$ has the topological finiteness property $\mathrm F_n$.  Let $Y$ be a classifying space for $G$ with finite $n$-skeleton.  Then Theorem~\ref{t:main.top.version} implies that $M$ has a classifying space with a finite $n$-skel\-e\-ton and hence property $\mathrm F_n$, completing the proof.
\end{proof}

I'll now try to handle the case where $B$ is infinite.  I'll use the following special case of the main result of~\cite{Hannebauer}.  Suppose that $H=G_1\ast G_2$ where $G_1$ is generated by $X_1$ and $G_2$ is generated by $X_2$.  If $\ab R$ is the relation module of $H$ with respect to the generators $X_1\cup X_2$, and $\ab {(R_i)}$ is the relation module of $G_i$ with respect to the generators $X_i$, for $i=1,2$, then $\ab R\cong (\mathbb ZH\otimes_{\mathbb ZG_1}\ab {(R_1)})\oplus (\mathbb ZH\otimes_{\mathbb ZG_2}\ab {(R_2)})$.
I suspect the sufficient condition in the next theorem is, in fact, necessary.

\begin{Thm}\label{t:top.finiteness}
Let $S=\mathcal M(G,A,B,C)$ be a completely simple semigroup over a group $G$.  Assume that $C$ is normalized with respect to $a_0\in A$ and $b_0\in B$, and set $\ov A=A\setminus\{a_0\}$. Suppose that $G$ has a presentation $\langle Y\bigcup \{C_{ba}\mid a\in \ov A\times B\}\mid r_1,\ldots, r_k\rangle$ with $Y\subseteq G$ finite.  Then $M=S^1$ is of type $\mathrm{F}_2$.  Moreover, if the subgroup $H$ of $G$ with presentation $\langle X\mid r_1,\ldots, r_k\rangle$ is of type $\mathrm F_n$ for $n\geq 2$, where $X$ consists of $Y$ and the finitely many $C_{ba}$ that appear in some relator $r_1,\ldots, r_k$, then $M$ is of type $\mathrm F_n$.
\end{Thm}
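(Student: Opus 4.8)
The plan is to establish the two assertions separately, building on the projective $M$-$1$-complex $\Gamma_S(Y)$ (with $A$ finite, as throughout this section) and on Theorem~\ref{t:fpthm}. To show $M$ is of type $\mathrm F_2$, I would start from $\Gamma_S(Y)$ and attach, for each relator $r_j$ ($1\le j\le k$), one projective $M$-$2$-cell $L_{b_0}\times B^2$ whose boundary is the loop spelling out $r_j$ starting at the vertex $(a_0,1,b_0)$ inside the $a_0$-copy $Z_{a_0}$ of the Cayley graph of $G$ with respect to $Y\cup\{C_{ba}\mid(a,b)\in\ov A\times B\}$; this is possible because, by hypothesis, $r_j$ is a word in those generators and $Z_{a_0}$ sits inside $\Gamma_S(Y)$. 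Let $\til K$ be the resulting complex, which is again a projective $M$-CW complex, being obtained from $\Gamma_S(Y)$ by attaching $k$ projective $M$-$2$-cells. The key point is that left translation by $L_{b_0}$ carries this reference loop onto all the $G$-translates of $r_j$ lying in every copy $Z_a$ — the normalization of $C$ is what makes this equivariant attaching map well defined and computes its orbit — so that the subcomplex of $\til K$ supported on $Z_a$ is exactly the Cayley complex of the presentation $\langle Y\cup\{C_{ba}\}\mid r_1,\dots,r_k\rangle$ of $G$, hence simply connected; since the copies $Z_a$ are joined only along the tree of edges $\epsilon_a$, van Kampen yields that $\til K$ is simply connected. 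Now \cite[Lemma~6.4]{TopFinite1} embeds $\til K$ as the $2$-skeleton of an equivariant classifying space $\til X$ for $M$, and $M\backslash\til X$ is then a classifying space for $M$ whose $2$-skeleton $M\backslash\til K$ has a single vertex, $|\ov A|+|Y|$ edges and $k$ two-cells, so $M$ is of type $\mathrm F_2$.

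For the ``moreover'' part I would reduce to Theorem~\ref{t:fpthm}, according to which it suffices to show that the relation module $\ab R$ — where $R=\ker(\gamma\colon F\to G)$, with $F$ the free group on $Y\cup\{C_{ba}\mid(a,b)\in\ov A\times B\}$ and $\gamma$ onto by hypothesis — is of type $\mathrm{FP}_{n-2}$ as a $\mathbb ZG$-module. Since $r_1,\dots,r_k$ involve only the generators in $X$, those in $(Y\cup\{C_{ba}\})\setminus X$ appear in no relator, so $G\cong H\ast F'$ with $H=\langle X\mid r_1,\dots,r_k\rangle$ and $F'$ the free group on the remaining generators. Feeding this free-product decomposition into the Hannebauer formula recalled above — and noting that the free factor $F'$ contributes nothing, its relation module being $0$ — gives $\ab R\cong\mathbb ZG\otimes_{\mathbb ZH}\ab{R_H}$, where $\ab{R_H}$ is the relation module of the finite presentation $\langle X\mid r_1,\dots,r_k\rangle$ of $H$. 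As $X$ is finite, $H$ of type $\mathrm F_n$ (hence of type $\mathrm{FP}_n$) forces $\ab{R_H}$ to be of type $\mathrm{FP}_{n-2}$ over $\mathbb ZH$ by Proposition~\ref{p:relation.module}. Finally, $\mathbb ZG$ is free, hence flat, as a right $\mathbb ZH$-module, and it sends finitely generated projective $\mathbb ZH$-modules to finitely generated projective $\mathbb ZG$-modules; therefore $\mathbb ZG\otimes_{\mathbb ZH}(-)$ turns a $\mathbb ZH$-projective resolution of $\ab{R_H}$ that is finitely generated through degree $n-2$ into one of $\ab R$ with the same property, so $\ab R$ is of type $\mathrm{FP}_{n-2}$ over $\mathbb ZG$. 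Theorem~\ref{t:fpthm} then gives that $M$ is of type $\mathrm{FP}_n$, and since $M$ is of type $\mathrm F_2$ by the first part, \cite[Theorem~6.15]{TopFinite1} upgrades this to $\mathrm F_n$.

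The step I expect to be the main obstacle is the topological bookkeeping in the first part: checking that translating the reference loop $r_j$ by $L_{b_0}$ genuinely defines a well-defined $M$-equivariant attaching map for the projective cell $L_{b_0}\times B^2$ and that its image is precisely the union of the $G$-translates of $r_j$ over all copies $Z_a$ — a short but normalization-dependent computation of the same flavour as the description of $\Gamma_S(Y)$ itself — and confirming that the Cayley-complex and van Kampen argument survives the case where $B$, and hence the generating set $Y\cup\{C_{ba}\}$, is infinite; this causes no trouble, since the Cayley complex of any presentation of a group is simply connected with no finiteness hypothesis on the generators. In the second part the only delicate point is correctly identifying the two relation modules occurring in the Hannebauer formula, namely that of $H$ relative to $X$ and that of $F'$ relative to its free basis, the latter being zero, after which the argument is purely formal.
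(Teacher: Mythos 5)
Your proposal is correct and follows essentially the same route as the paper: attach $k$ projective $2$-cells $L_{b_0}\times B^2$ to $\Gamma_S(Y)$ along the relators so that the result is $|A|$ copies of the Cayley complex of the given presentation joined along the tree of edges $\epsilon_a$, hence simply connected and of finite type (giving $\mathrm F_2$), and then use the decomposition $G\cong H\ast F'$ with Hannebauer's formula and Theorem~\ref{t:fpthm} to get $\mathrm{FP}_n$, upgraded to $\mathrm F_n$ by \cite[Theorem~6.15]{TopFinite1}. The only cosmetic differences are that you route the $\mathrm F_2$ conclusion through \cite[Lemma~6.4]{TopFinite1} where the paper cites \cite[Proposition~6.10]{TopFinite1}, and you make explicit the appeal to Proposition~\ref{p:relation.module} (to pass from $H$ of type $\mathrm F_n$ to its relation module being of type $\mathrm{FP}_{n-2}$) that the paper leaves implicit.
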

\begin{proof}
We shall add finitely many projective $M$-cells of dimension $2$ to $\Gamma_S(Y)$ to obtain a simply connected projective $M$-CW complex.  This will imply that $M$ is of type $\mathrm F_2$ by~\cite[Proposition~6.10]{TopFinite1}.  Recall that $\Gamma_S(Y)$ looks like a disjoint union of $|A|$ copies of the Cayley graph of $G$ with respect to the generating set $Y\bigcup \{C_{ba}\mid a\in \ov A\times B\}$ attached by the edges $\epsilon_a$ from $(a_0,1,b_0)$ to $(a,1,b_0)$ with $a\in \ov A$.  Moreover, left multiplication by $(a,1,b_0)$ provides the isomorphism from the $a_0$-copy to the $a$-copy (preserving the labelling of both the edges and the vertices).  It follows that if we glue in $k$ projective  $M$-cells $L_{b_0}\times B^2$ by attaching $2$-cells $sc_i$ with boundary path the circuit at $s\in L_{b_0}$ labeled by $r_i$ (under our identification of edges of $\Gamma_S(Y)$ other than the $\epsilon_a$ with edges of the Cayley graph of $G$), then we obtain a projective $M$-CW complex which topologically is $|A|$ disjoint copies of the universal cover of the presentation complex of  $\langle Y\bigcup \{C_{ba}\mid a\in \ov A\times B\}\mid r_1,\ldots, r_k\rangle$ attached by the edges $\epsilon_a$ from $(a_0,1,b_0)$ to $(a,1,b_0)$ with $a\in \ov A$, which is simply connected.  It follows that $M$ is of type $\mathrm F_2$.  If $n\geq 2$, then $M$ is of type $\mathrm F_n$ if and only if it is of type $\mathrm{FP}_n$ by~\cite[Theorem~6.15]{TopFinite1}.

Next observe that $G=H\ast L$ where $L$ is a free group on those entries $C_{ba}$ with $(a,b)\in \ov A\times B$ that do not appear in any of $r_1,\ldots r_k$.  Notice that  the relation module for $L$ with respect to its free generators is $0$.   By~\cite{Hannebauer}, if $V$ is the relation module for the presentation $\langle X\mid r_1,\ldots, r_k\rangle$ of $H$, then for $\gamma\colon F\to G$ as in Theorem~\ref{t:fpthm} with $R=\ker \gamma$, one has $\ab R\cong \mathbb ZG\otimes_{\mathbb ZH}V$.    Thus if $V$ is of type $\mathrm{FP}_{n-2}$, then so is $\ab R$ (as $\mathbb ZG$ is a free right $\mathbb ZH$-module and hence tensoring with $\mathbb ZG$ preserves projectivity, finite generation and resolutions).  Thus $M$ is of type $\mathrm{FP}_n$ by Theorem~\ref{t:fpthm}.

\end{proof}

\def\malce{\mathbin{\hbox{$\bigcirc$\rlap{\kern-7.75pt\raise0,50pt\hbox{${\tt
  m}$}}}}}\def\cprime{$'$} \def\cprime{$'$} \def\cprime{$'$} \def\cprime{$'$}
  \def\cprime{$'$} \def\cprime{$'$} \def\cprime{$'$} \def\cprime{$'$}
  \def\cprime{$'$} \def\cprime{$'$}

%\bibliographystyle{abbrv}
%\bibliography{standard2}

\end{document}